\numberwithin{equation}{section}
\title[Reconstruction of projective curves from the derived category]{Reconstruction of projective curves from the derived category}
\begin{document}
\begin{abstract}
    We prove that integral projective curves with Cohen-Macaulay singularities over an algebraically closed field of characteristic zero are determined by their triangulated category of perfect complexes. This partially extends a theorem of Bondal and Orlov to the case of singular projective varieties, and in particular shows that all integral complex projective curves are determined by their category of perfect complexes.
\end{abstract}

%%% Local Variables: 
%%% mode: latex
%%% TeX-master: "main"
%%% End: 

\author{Dylan Spence}

\address{
  Department of Mathematics\\
  Indiana University\\
  Rawles Hall\\
  831 East 3rd Street\\
  Bloomington, IN 47405\\
  USA
}
\email{dkspence@indiana.edu}

\subjclass[2010]{14F05}
\keywords{Derived categories, Reconstruction}
\maketitle
\setcounter{secnumdepth}{1}
\setcounter{tocdepth}{1}
\tableofcontents

\section{Introduction}

Derived categories of sheaves are a relatively recent development in algebraic geometry. Besides intrinsic interest, they present a fruitful approach to noncommutative and homological methods in algebraic geometry, and since their initial development, many interesting connections to more classical questions have been discovered. For any variety $X$ we can associate to it the bounded derived category of coherent sheaves $\D^b(X)$, which is the Verdier quotient of bounded chain complexes of objects in $\coh X$ by the subcategory of bounded acyclic complexes. The resulting category is no longer abelian, but rather admits a different structure known as a triangulation.

The first question one might ask is how much information about $X$ is lost in this process. It is a well-known theorem of Gabriel that the abelian category $\coh X$ completely determines $X$, but the analogous question for $\D^b(X)$ is more interesting. If we consider $\D^b(X)$ with its tensor product as a symmetric monoidal triangulated category, then it remains a complete invariant \cite{Balmer-Spectrum_of_prime_ideals_in_tensor_categories}. However there are many interesting equivalences that do not respect the tensor product. So instead considering $\D^b(X)$ purely as a triangulated category, we ask the same question. In the case of a smooth projective variety, the answer is tied to the intrinsic geometry of $X$, namely if $\omega_X$ is (anti) ample, then the famous ``reconstruction theorem" of Bondal and Orlov in \cite{BondalOrlov-Reconstruction_from_the_derived_category} states that the derived category is still a complete invariant. However in the case where $\omega_X$ is neither (in particular, trivial), then there are examples of non-isomorphic varieties with equivalent derived categories. Historically the first examples were found by Mukai \cite{Mukai-Duality_and_picard_sheaves}, and the specific functor employed in the equivalence is now referred much more generally as an ``integral functor'', with the term ``Fourier-Mukai functor" typically reserved for those integral functors which are equivalences. Integral functors have taken center stage as the geometric functors of primary consideration, due both to utility and necessity, as a seminal result of Orlov \cite{Orlov-Equivalences_and_K3_surfaces} showed that any derived equivalence between two smooth projective varieties is given by a Fourier-Mukai functor. 

An aspect of the theory which has, up until somewhat recently (see \cite{Ballard-Derived_categories_of_singular_schemes_and_reconstruction,Ruiperez-Fourier-Mukai_transforms_for_Gorenstein_schemes,SalasSalas-Reconstruction_from_the_derived_category,Ruiperez-Relative_integral_functor_for_singular_fibrations_and_partners, Karmazyn-Derived_categories_of_singular_surfaces}), been mostly ignored is the case when $X$ is singular. However many of the basic tools are now in place for its detailed study. For example, the work of Lunts and Orlov \cite{LuntsOrlov-Uniqueness_of_DG_enhancements} has shown that fully faithful functors between projective varieties with arbitrary singularities are integral functors. In addition, many details about the structure of arbitrary triangulated categories are now known. In this paper we are interested in the question of reconstruction, more precisely, to what extent is $X$ determined by $\D^b(X)$ (or $\perf X$) when $X$ is a singular variety. To give some context for our result, it is known that the reconstruction theorem of Bondal and Orlov holds for Gorenstein varieties with ample or anti-ample dualizing bundle, proven by Ballard \cite{Ballard-Derived_categories_of_singular_schemes_and_reconstruction} and de Salas and de Salas \cite{SalasSalas-Reconstruction_from_the_derived_category}. We provide an answer to the reconstruction problem for the next most reasonable type of singularity, those of Cohen-Macaulay type. However due to complications in higher dimensions, we make the additional simplification to consider only $\dim X=1$, namely, projective curves. Since any integral projective curve is automatically Cohen-Macaulay, we lose no generality by our assumptions on the singularities, with arbitrary singularities (necessarily in higher dimensions) left for future work. 

To give more context for out result, let us remind the reader about what is known for dimension one. As a special case of the results cited above, any curve $X$ which is regular or Gorenstein with ample or anti-ample dualizing bundle is completely determined by $\perf X$. The simplicity of curves allows us to reformulate conditions on the canonical bundle to conditions on the arithmetic genus $p_a(X)$. It is well known that any regular curve with $p_a(X) \neq 1$ has anti-ample ($=0$) or ample ($\geq 2$) canonical bundle $\omega_X$. When $X$ is Gorenstein, the situation is only marginally more complicated. As is well-known, $p_a(X)=0$ can only occur when $X \cong \P^1$; on the other hand by Riemann-Roch, the dualizing bundle on a Gorenstein curve with $p_a(X) \geq 2$ has positive degree. Now a theorem of Catanese \cite[Theorem B]{Catanese-Pluricanonical_gorenstein_curves} tells us that the dualizing bundle is ample. 

The only remaining case for regular or Gorenstein curves is when the arithmetic genus is one. Over $\C$, there are few possibilities; the curve is either regular (and hence an elliptic curve) or the curve is Gorenstein, and must be either a cuspidal or nodal cubic. The former case can be handled in several ways, originally in \cite{HilleVandenBergh-FM_transforms} it was shown that any two derived equivalent elliptic curves must be isomorphic (see also \cite{Bernardara-FM_curves}), and the analogous result in the latter cases are proven in \cite{Martin-Fourier-mukai_partners_of_singular_genus_one_curves, MartinPrieto-Derived_equivalences_and_kodaira_fibers}. If we are not over $\C$, see \cite[Section 2.3]{Ward-Arithemtic_Calabi_Yau} for a discussion when $X$ is regular. Assembling the results in this and the previous paragraph, it follows that if $X$ is a regular or Gorenstein complex projective curve, it is completely determined by $\perf X$. 

Our result is the following:
\begin{introthm}{A}\label{Main Theorem}
  Let $X$ and $Y$ be integral projective varieties such that $\dim X =1$ and $X$ is strictly Cohen-Macaulay (i.e. contains a singular point which is not smooth or Gorenstein). Then $\perf X \cong \perf Y$ if and only if $X \cong Y$.
\end{introthm}
\noindent From our result, it then follows that all integral complex projective varieties of dimension one are completely determined by their triangulated category of perfect complexes.

We remark here that since derived equivalences in the singular case are all Fourier-Mukai functors, the work of Ruip\'erez et al. \cite[Theorem 4.4]{Ruiperez-Relative_integral_functor_for_singular_fibrations_and_partners} implies that $\dim Y = \dim X$ and that $Y$ is Cohen-Macaulay. In our proof we will comment on a different way to reach the same conclusion.

Let us now comment on the structure of the paper. We have broken the discussion into two parts, the first part serving as a reminder on some relevant aspects of derived categories. While the original proof of the reconstruction theorem in \cite{BondalOrlov-Reconstruction_from_the_derived_category} works for any equivalence, our proof relies upon, in an essential way, the work of Lunts and Orlov \cite{LuntsOrlov-Uniqueness_of_DG_enhancements} who showed that equivalences between derived categories of projective varieties are given by Fourier-Mukai functors. As such the first subsection is devoted to a review of this very important class of functors. The fundamental result of this subsection is Lemma \ref{lemma_equivalences}, which shows that under our conventions, a Fourier-Mukai equivalence between any of $\D(\qcoh(-))$, $\D^b(-)$, or $\perf (-)$ (see the conventions below for notation) automatically induces a Fourier-Mukai equivalence between the other two pairs. This result is certainly already known to the experts, but we included a proof due to lack of an adequate reference. The following subsection is devoted to a review of a replacement for the Serre functor on the bounded derived category, which following Ballard in \cite{Ballard-Derived_categories_of_singular_schemes_and_reconstruction}, we label a Rouquier functor. 

The only other part of the paper is devoted to the proof. We first take the opportunity to establish a special class of objects of $\perf X$, the perfect zero-cycles. They are a class of skyscraper sheaves on our curve $X$ which are structure sheaves of thickened points; their most useful property is their ability to detect the support of a complex in $\D^b(X)$. After some minor comments about the Rouquier functor in the case of projective curves, the proof then begins in earnest. The proof consists of two major steps. The first is to classify, in a manner very similar to previous work in \cite{BondalOrlov-Reconstruction_from_the_derived_category} and \cite{SalasSalas-Reconstruction_from_the_derived_category}, the perfect zero-cycles on our curves. We then use these objects to show that the Fourier-Mukai functor preserves the structure sheaves of closed points, and well known techniques will finish the proof. 

\subsection{Acknowledgements}
The author wishes to thank Valery Lunts for his valuable input, encouragement, and continual support while advising him on this problem. He also wishes to thank Alexander Polishchuk, Fernando Sancho de Salas, and especially Xuqiang Qin for encouragement and helpful discussions. The author is also indebted to Noah Riggenbach, whose careful reading and criticisms of the initial drafts considerably improved the exposition.  

In addition the author and this project benefitted from the workshops ``Derived Categories, Moduli Spaces and Deformation Theory" and ``The Geometry of Derived Categories" held in Cetraro and Liverpool respectively in 2019, and we thank the organizers and the participants of the events for providing a stimulating research environment. 

\subsection{Conventions}
Throughout we work over a fixed algebraically closed field $k$ of characteristic zero. In addition, unless explicitly stated otherwise, any scheme $X$ is assumed to be a projective variety; in particular for us this means an integral separated scheme of finite type over $\spec k$, such that $X$ admits a closed immersion into some projective space. A (projective) curve refers to a (projective) variety of dimension one. 

Unadorned products of varieties and unadorned tensor products of sheaves on $X$ always mean the fiber product over $\spec k$ and the tensor product over $\O_X$ respectively. We indicate the right derived functor of a left exact functor $F$ by $\R F$, and similarly the left derived functor of a right exact functor $G$ by $\L G$.

By $\D^*(\qcoh X)$ and $\D^*(X)$ we always mean the derived category of complexes of quasi-coherent sheaves and coherent sheaves respectively, with appropriate decoration $* \in \{-,+,b\}$ meaning bounded above, below, and bounded respectively. In particular, we identify $\D^b(X)$ with the full subcategory of $\D(\qcoh X)$ consisting of complexes of quasi-coherent sheaves with bounded coherent cohomology sheaves. By $\perf X$ we mean the full triangulated subcategory of $\D(\qcoh X)$ consisting of complexes which are Zariski-locally isomorphic (in $\D(\qcoh X)$) to a bounded complex of vector bundles. It follows from our assumptions that every perfect complex on $X$ is strictly perfect, that is, globally isomorphic to a bounded complex of vector bundles, as any separated scheme with an ample family of line bundles satisfies this \cite[Theorem 2.4.3]{ThomasonTrobaugh-Higher_K_theory}.
\section{Derived categories of sheaves}

Here we review a few auxiliary facts about derived categories of sheaves which will be of use. The first section is dedicated to the study of integral functors and the focus is mainly on when they can be extended or restricted between subcategories of $\D(\qcoh X)$. The second section focuses on a more general version of the Serre functor on $\D^b(X)$, which was originally studied in \cite{Rouquier-Dimensions_of_triangulated_categories}. 

\subsection{Fourier-Mukai Partners}
Two schemes $X$ and $Y$ are said to be derived equivalent if there is a triangulated equivalence between the two categories $\D(\qcoh X)$ and $\D(\qcoh Y)$. It is a desirable property for any functor $F:\D(\qcoh X) \to \D(\qcoh Y)$ between the two categories to be represented by an object on the product, namely, consider the product: 
\begin{center}
  \begin{tikzcd}
    & X \times Y \arrow[ld, "\pi_X"'] \arrow[rd, "\pi_Y"] &   \\
  X &                                                     & Y,
  \end{tikzcd}
\end{center}
with $\pi_X$ and $\pi_Y$ the projections. Given some object $\K$ in $\D(\qcoh (X \times Y))$, called the kernel, we ask if $F$ is isomorphic to the triangulated functor $$\Phi^{X \to Y}_\K (-) = \R \pi_{Y*} (\pi_X^*(-) \tens{\L} \K):\D(\qcoh X) \to \D(\qcoh Y).$$ If so, we say that $F$ is an integral functor with kernel $\K$. If it is also an equivalence, we say that it is a Fourier-Mukai functor. 

Fourier-Mukai functors also make sense when, instead of considering the unbounded derived category of quasi-coherent sheaves, we consider other suitable subcategories, namely $\perf X$ and $\D^b(X)$. For both of these subcategories, we make a nearly identical definition as above. 

It is a theorem of Orlov \cite[Theorem 2.2]{Orlov-Equivalences_and_K3_surfaces} that for $X$ and $Y$ smooth projective varieties, then any triangulated fully faithful functor admitting adjoints between $\D^b(X)$ and $\D^b(Y)$ is an integral functor. The question then is to extend this to the singular case, and using the formalism of dg-enhancements, Lunts and Orlov were able to show the following.

\begin{lemma}[\cite{LuntsOrlov-Uniqueness_of_DG_enhancements}, Corollary 9.12]\label{lemma_FM for qcoh}
  Let $X$ and $Y$ be quasi-compact separated schemes over a field $k$. Assume that $X$ has enough locally free sheaves. Let $F : \D(\qcoh X) \to \D(\qcoh Y)$ be a fully faithful functor that commutes with coproducts. Then there is an object $\K \in \D(\qcoh (X \times Y))$ such that the functor $\Phi^{X \to Y}_\K$ is fully faithful and $\Phi^{X \to Y}_\K(C) \cong F(C)$ for any $C \in \D(\qcoh X)$. 
\end{lemma}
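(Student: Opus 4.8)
The plan is to reduce the statement to derived Morita theory via DG-enhancements, which is the route Lunts and Orlov take. The essential point is that the triangulated structure alone is too weak to guarantee a kernel (exact functors need not even be determined by their effect on objects), so one must pass to an enhancement. First I would recall two structural facts. By the theorem of Bondal and Van den Bergh, for $X$ quasi-compact and separated the category $\D(\qcoh X)$ is compactly generated, admits a single compact generator $G$, and its subcategory of compact objects is exactly $\perf X$. Second, $\D(\qcoh X)$ always carries a DG-enhancement $\mathcal{A}_X$ (for instance the DG-category of h-injective complexes), and the hypothesis that $X$ has enough locally free sheaves is precisely what forces this enhancement to be unique up to quasi-equivalence. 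This uniqueness is the technical heart of the argument and the main obstacle; everything afterward is comparatively formal.

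Granting uniqueness of enhancements, the next step is to lift $F$ to the DG-level. Because $F$ commutes with coproducts and both source and target have unique enhancements, $F$ is induced by a quasi-functor $\tilde F \colon \mathcal{A}_X \to \mathcal{A}_Y$. Here I invoke derived Morita theory in the form due to To\"en and Keller: the homotopy category of coproduct-preserving quasi-functors between the derived categories of $\mathcal{A}_X$ and $\mathcal{A}_Y$ is equivalent to the derived category of $(\mathcal{A}_X,\mathcal{A}_Y)$-bimodules. Concretely, presenting $\D(\qcoh X)$ as the derived category of the endomorphism DG-algebra $E = \R\mathrm{Hom}(G,G)$ of the generator, the quasi-functor $\tilde F$ is represented by tensoring with a bimodule $M$, namely the complex computing $\R\mathrm{Hom}_Y(\tilde F(G), -)$ equipped with its residual $E$-action.

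The remaining task is to identify this bimodule with an object $\K$ on the product. For this one uses a K\"unneth-type statement for enhancements: the external tensor product carries $\perf X \boxtimes \perf Y$ into $\perf(X \times Y)$, its image generates $\D(\qcoh (X \times Y))$, and correspondingly $G \boxtimes G_Y$ is a compact generator whose endomorphism DG-algebra is $E_X^{\mathrm{op}} \otimes E_Y$. Thus the category of bimodules is equivalent to $\D(\qcoh(X \times Y))$; transporting $M$ across this equivalence yields $\K \in \D(\qcoh(X \times Y))$. Unwinding the bimodule action against the definition of $\boxtimes$ then reproduces the integral-functor formula $\Phi^{X \to Y}_\K(-) = \R\pi_{Y*}(\pi_X^*(-) \tens{\L} \K)$ on the generator, hence on all of $\D(\qcoh X)$, since both $F$ and $\Phi^{X\to Y}_\K$ preserve coproducts and agree on a generating set. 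Finally, as $\Phi^{X\to Y}_\K$ is by construction isomorphic to $F$, its full faithfulness is inherited from that of $F$ with no extra work.

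I expect the decisive difficulty to be the uniqueness of DG-enhancements for $\D(\qcoh X)$ at the level of generality of a quasi-compact separated scheme with enough locally free sheaves: this is where the hypotheses genuinely enter and where the bulk of the obstruction-theoretic machinery is consumed. By contrast, the identification of bimodules with sheaves on the product, though it demands care in passing between $\perf$ and $\D(\qcoh)$ and in controlling boundedness, is essentially a formal consequence of compact generation once the enhancement is available.
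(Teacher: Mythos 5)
You should first be aware that the paper itself does not prove this lemma: it is quoted verbatim from Lunts--Orlov (their Corollary 9.12) and used as a black box. So the only thing to compare your sketch against is their actual argument, which your outline attempts to reproduce --- and there is a genuine gap at its central step.

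The gap is the claim that ``because $F$ commutes with coproducts and both source and target have unique enhancements, $F$ is induced by a quasi-functor $\tilde F$.'' Uniqueness of DG enhancements is a statement about categories, not about functors: it says any two enhancements of $\D(\qcoh X)$ are quasi-equivalent, and it does \emph{not} imply that a given triangulated functor lifts to the DG level. Whether such lifts exist is precisely the hard problem in this area (and for functors that are not fully faithful, Rizzardo--Van den Bergh later showed lifts can fail to exist). If your lifting step were available, To\"en's derived Morita theory would immediately give an isomorphism of functors $F \cong \Phi^{X \to Y}_\K$, which is strictly stronger than the stated conclusion; note that the paper explicitly warns, in the sentence immediately following the lemma, that such an isomorphism of functors may \emph{not} exist. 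The deliberately weak, pointwise form of the conclusion --- $\Phi^{X \to Y}_\K(C) \cong F(C)$ for each object $C$, with no naturality --- is exactly the symptom of the failure of your step. What Lunts and Orlov actually do is never lift $F$ itself: using the hypothesis that $X$ has enough locally free sheaves, they work with a generating subcategory built from coproducts of locally free sheaves, between which morphisms in negative degrees vanish; this vanishing is what makes an obstruction-theoretic construction of a quasi-functor agreeing with $F$ on that subcategory possible. That quasi-functor (not $F$) is then extended using compact generation and identified with an integral functor $\Phi^{X \to Y}_\K$ by the bimodule/K\"unneth identification you describe --- that portion of your sketch is essentially correct --- and finally one checks by a d\'evissage through homotopy colimits of generators that the resulting integral functor agrees with $F$ \emph{on objects}, and no more. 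For the same reason your closing remark that full faithfulness of $\Phi^{X \to Y}_\K$ is ``inherited from $F$ with no extra work'' also fails: with only objectwise agreement, full faithfulness does not transfer formally from $F$ and must be established for the constructed quasi-functor directly.
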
 
\noindent We emphasize that it may \emph{not} be true that there is an isomorphism of functors $F \cong \Phi^{X \to Y}_\K$. From elementary arguments however, it is true that if $F$ is an equivalence, then $\Phi^{X \to Y}_\K$ will be as well. 

However the main object of study, for many technical reasons, is typically not $\D(\qcoh X)$. Instead one works with the better behaved subcategories $\D^b(X)$ and $\perf X$. Again, Lunts and Orlov were able to give a very general result, dependent on the geometry of $X$.

\begin{lemma}[\cite{LuntsOrlov-Uniqueness_of_DG_enhancements}, Corollary 9.17]\label{LuntsOrlov_FM for coh}
  Let $X$ be a projective scheme with maximal torsion subsheaf of dimension zero $T_0(\O_X)$ trivial, and $Y$ a noetherian scheme. Let $F:\D^b(X) \to \D^b(Y)$ be a fully faithful functor that has a right adjoint. Then there is an object $\K \in \D^b(X \times Y)$ such that $F \cong \Phi^{X \to Y}_\K|_{\D^b(X)}$.
\end{lemma}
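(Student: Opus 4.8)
The plan is to deduce the statement from the quasi-coherent version, Lemma \ref{lemma_FM for qcoh}, by first manufacturing from $F$ a fully faithful, coproduct-preserving functor $\bar{F} : \D(\qcoh X) \to \D(\qcoh Y)$ whose restriction to $\D^b(X)$ recovers $F$. Since $X$ is projective it has enough locally free sheaves, so once such a $\bar{F}$ is in hand, Lemma \ref{lemma_FM for qcoh} immediately supplies a kernel $\K \in \D(\qcoh(X \times Y))$ with $\Phi^{X \to Y}_\K(C) \cong \bar{F}(C)$ for all $C$, and restricting to $\D^b(X)$ gives $F \cong \Phi^{X \to Y}_\K|_{\D^b(X)}$. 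Two things then remain: the construction of $\bar{F}$, and the verification that the kernel can be taken in $\D^b(X \times Y)$ rather than merely in $\D(\qcoh(X \times Y))$.

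For the extension I would pass to DG enhancements. Since $X$ is projective and $T_0(\O_X)$ is trivial, the hypotheses of the Lunts and Orlov uniqueness theorem for DG enhancements are satisfied, and the existence of a right adjoint is precisely what lets me lift $F$ to (equivalently, represent it by) a bimodule over the enhancements of $\D^b(X)$ and $\D^b(Y)$. Realizing $\D(\qcoh X)$ as DG modules over an enhancement of $\perf X$, inside which $\D^b(X)$ is the subcategory of modules with bounded coherent cohomology, the tensor product with this bimodule yields a coproduct-preserving functor $\bar{F} : \D(\qcoh X) \to \D(\qcoh Y)$ extending $F$. Its full faithfulness I would verify first on a compact generator of $\D(\qcoh X)$ and then propagate to the localizing subcategory it generates by a standard argument with coproducts, using that $F$ is already fully faithful on the bounded coherent objects.

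Finally, to see that $\K$ lies in $\D^b(X \times Y)$, I would test it on skyscrapers: for a closed point $x \in X$ the projection formula identifies $\Phi^{X \to Y}_\K(\O_x)$ with the derived restriction $\L i_x^* \K$ along the closed immersion $i_x : \{x\} \times Y \hookrightarrow X \times Y$, and since $F(\O_x) \cong \Phi^{X \to Y}_\K(\O_x)$ lies in $\D^b(Y)$ this restriction has bounded coherent cohomology for every $x$. A fiberwise-boundedness argument (semicontinuity of the amplitude of $\L i_x^* \K$ over the noetherian base $X$, together with coherence of the cohomology sheaves of $\K$) then forces $\K \in \D^b(X \times Y)$. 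I expect the main obstacle to be the construction and control of $\bar{F}$ in the second step: producing a coproduct-preserving extension that is simultaneously fully faithful and restricts to $F$ is exactly the point where the uniqueness of DG enhancements, and hence the projectivity and the torsion hypothesis $T_0(\O_X)=0$, are indispensable; by contrast the passage through Lemma \ref{lemma_FM for qcoh} and the coherence of the kernel are comparatively formal.
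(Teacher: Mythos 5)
The first thing to note is that the paper does not prove this lemma at all: it is quoted directly from Lunts and Orlov (Corollary 9.17 of the cited work), so there is no internal proof to compare against, and your write-up is in effect an attempt to reconstruct Lunts--Orlov's own argument. As such it has genuine gaps. The central one is your reliance on Lemma \ref{lemma_FM for qcoh}: that lemma produces a kernel $\K$ together with isomorphisms $\Phi^{X \to Y}_\K(C) \cong \bar{F}(C)$ \emph{object by object}, not an isomorphism of functors --- the paper stresses exactly this caveat immediately after stating it. So even granting your extension $\bar{F}$, restricting to $\D^b(X)$ yields only objectwise isomorphisms, which is strictly weaker than the required conclusion $F \cong \Phi^{X \to Y}_\K|_{\D^b(X)}$. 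To obtain a functorial isomorphism one must stay at the DG level and show that a quasi-functor lifting $F$ is itself represented by a kernel; but then that lift is the entire content of the theorem, and your dispatch of it (``the existence of a right adjoint is precisely what lets me lift $F$ to a bimodule'') asserts rather than proves the crux. Triangulated functors do not lift to quasi-functors in general; producing the lift is exactly where Lunts--Orlov's (strong) uniqueness-of-enhancement and ample-sequence machinery do all the work. Relatedly, your propagation of full faithfulness of $\bar{F}$ from a compact generator $E$ to all of $\D(\qcoh X)$ requires $\hom(\bar{F}(E),-)$ to commute with coproducts, i.e. $\bar{F}(E) \in \perf Y$; but $F$ is only assumed to land in $\D^b(Y)$, not in $\perf Y$, so that step is unavailable.

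The final step also fails as stated. Boundedness and coherence of $\L i_x^* \K$ for every closed point $x \in X$ do not force $\K \in \D^b(X \times Y)$: already for $X$ an integral projective curve (which satisfies all the hypotheses), take $\K = \oplus_{n \geq 0}\, \O_{\{x_n\}\times Y}[n]$ for infinitely many pairwise distinct regular closed points $x_n \in X$. Every derived fiber is bounded with coherent cohomology (it vanishes unless $x = x_n$, where the Koszul resolution of $x_n$ in $X$ gives $(\O_Y \oplus \O_Y[1])[n]$), yet $\K$ has nonzero cohomology sheaves in infinitely many degrees. No semicontinuity theorem rules this out for an arbitrary object of $\D(\qcoh(X \times Y))$ --- semicontinuity of Tor-amplitude applies to pseudo-coherent complexes, and the coherence of the cohomology of $\K$, which you invoke, is itself part of what must be proved, so the argument is circular. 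The standard repair uses projectivity of $X$ globally rather than pointwise: push $\K$ forward to $\P^N \times Y$ and apply the Beilinson resolution of the diagonal to express it in terms of the finitely many objects $\Phi^{X \to Y}_\K(\O_X(j)) \in \D^b(Y)$, $0 \leq j \leq N$, which produces the uniform bound on amplitude and the coherence simultaneously.
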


For the category of perfect complexes, they were also able to conclude:

\begin{lemma}[\cite{LuntsOrlov-Uniqueness_of_DG_enhancements}, Corollary 9.13]\label{LuntsOrlov_FM for perf}
  Let $X$ be a projective scheme with maximal torsion subsheaf of dimension zero $T_0(\O_X)$ trivial, and $Y$ a quasi-compact and separated scheme. If $F: \perf X \to \perf Y$ is a fully faithful functor, then there is an object $\K$ in $\D^b(X \times Y)$ such that $F \cong \Phi^{X \to Y}_\K|_{\perf X}$.
\end{lemma}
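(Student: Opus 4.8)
The plan is to deduce this from the quasi-coherent statement, Lemma \ref{lemma_FM for qcoh}, by first extending $F$ to the unbounded derived categories and then restricting the resulting kernel. The starting point is that for $X$ quasi-compact and separated the subcategory $\perf X$ is precisely the subcategory of compact objects of the compactly generated triangulated category $\D(\qcoh X)$, and likewise for $Y$ \cite[Theorem 2.4.3]{ThomasonTrobaugh-Higher_K_theory}. Thus $F$ is a fully faithful functor between the compact objects of two compactly generated categories, and its image lands in the compact objects $\perf Y$ of $\D(\qcoh Y)$.

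First I would extend $F$ to a coproduct-preserving functor $\tilde F : \D(\qcoh X) \to \D(\qcoh Y)$ with $\tilde F|_{\perf X} \cong F$. Working at the level of the dg-enhancements used throughout \cite{LuntsOrlov-Uniqueness_of_DG_enhancements}, every object of $\D(\qcoh X)$ is a homotopy colimit of perfect complexes, so such an extension exists and is unique once one demands that it commute with coproducts; concretely it is the ind-extension of $F$ along $\perf X \hookrightarrow \D(\qcoh X) \simeq \mathrm{Ind}(\perf X)$. The extension $\tilde F$ is again fully faithful: for a compact object $P$ the functors $\mathrm{Hom}(\tilde F P, \tilde F(-))$ and $\mathrm{Hom}(P,-)$ are cohomological and commute with coproducts, they agree on the generating set $\perf X$ because $F$ is fully faithful, and hence they agree on all of $\D(\qcoh X)$ by a standard dévissage; varying the first argument over the compact generators then gives full faithfulness in general.

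With $\tilde F$ in hand I would apply Lemma \ref{lemma_FM for qcoh} (note that a projective scheme has enough locally free sheaves) to obtain a kernel $\K \in \D(\qcoh(X \times Y))$ together with an isomorphism $\tilde F \cong \Phi^{X\to Y}_\K$. Restricting along $\perf X \hookrightarrow \D(\qcoh X)$ yields $F \cong \Phi^{X\to Y}_\K|_{\perf X}$, which already proves the statement except for the assertion that the kernel may be taken in $\D^b(X \times Y)$.

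The main obstacle is exactly this last point: upgrading the a priori unbounded quasi-coherent kernel $\K$ to an object with bounded coherent cohomology. On a smooth variety one recovers $\K$ by applying the functor to skyscraper sheaves, or by feeding the structure sheaf of the diagonal through $\Phi^{X\to Y}_\K \times \mathrm{id}$, and boundedness is immediate; but on a singular $X$ neither $k(x)$ nor $\O_\Delta$ is perfect, so these test objects are \emph{not} available inside $\perf X$. This is precisely the difficulty that forces the dg-enhancement techniques of \cite{LuntsOrlov-Uniqueness_of_DG_enhancements}: one instead tests $\Phi^{X\to Y}_\K$ against a perfect classical generator of $\perf X$ (for instance a finite sum of twists $\O(-n)$ of an ample line bundle) and uses that $\Phi^{X\to Y}_\K$ carries $\perf X$ into $\perf Y \subseteq \D^b(Y)$, together with the hypothesis $T_0(\O_X)=0$, which guarantees that $\O_X$ carries no zero-dimensional torsion and hence behaves as a faithful generator, to control the cohomology sheaves of $\K$ and conclude that $\K \in \D^b(X \times Y)$.
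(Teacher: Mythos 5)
This lemma is not proved in the paper at all: it is quoted verbatim from Lunts--Orlov, and its proof there rests on the uniqueness-of-dg-enhancements machinery. Your proposal tries to bypass that machinery, and this is exactly where it breaks down. The decisive gap is your first step: a triangulated functor $F:\perf X \to \perf Y$ between the compact objects of two compactly generated categories does \emph{not} in general extend to a coproduct-preserving functor $\tilde F: \D(\qcoh X) \to \D(\qcoh Y)$. The ind-extension you invoke exists at the level of dg-categories, but $F$ is only assumed to be a triangulated functor: it comes with no lift to the enhancements, and homotopy colimits are not functorial in a bare triangulated category, so ``every object is a homotopy colimit of perfect complexes'' does not let you define $\tilde F$ on non-compact objects (cones, and hence colimit constructions, depend on choices that $F$ need not respect coherently). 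Producing a dg-lift of $F$ is precisely the hard content of \cite{LuntsOrlov-Uniqueness_of_DG_enhancements}: one needs the uniqueness of the enhancement of $\perf X$, and that is exactly where the hypothesis $T_0(\O_X)=0$ enters --- not, as in your last paragraph, in the boundedness of the kernel. Your argument never makes genuine use of this hypothesis, which is a sign that the main difficulty has been assumed away.

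There is a second, independent gap. Even if $\tilde F$ existed, Lemma \ref{lemma_FM for qcoh} only provides a kernel $\K$ with \emph{objectwise} isomorphisms $\Phi^{X\to Y}_\K(C) \cong \tilde F(C)$; as the paper stresses immediately after that lemma, it may not be true that $\tilde F \cong \Phi^{X\to Y}_\K$ as functors. Restricting to $\perf X$ would therefore only yield $F(P) \cong \Phi^{X \to Y}_\K(P)$ for each perfect complex $P$, which is strictly weaker than the isomorphism of functors $F \cong \Phi^{X\to Y}_\K|_{\perf X}$ asserted in the statement. (Your final paragraph on upgrading $\K$ to $\D^b(X\times Y)$ is also only a sketch, but that is a comparatively minor issue: the fatal problems are the unjustified extension and the functorial, rather than objectwise, conclusion.)
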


\begin{remark}
  In Theorem \ref{Main Theorem}, the assumption that $X$ was integral allows us to conclude that $\O_X$ is a torsion-free sheaf, and so the hypotheses of Lemmas \ref{LuntsOrlov_FM for coh} and \ref{LuntsOrlov_FM for perf} are satisfied.
\end{remark}

\begin{remark}\label{remark_FM coproducts}
  We record here for the reader that integral functors in great generality will commute with all coproducts. The only difficulty is the derived pushforward, but so long as $f:X \to Y$ is quasi-compact and quasi-separated, $\R f_*$ will commute with arbitrary coproducts, see \cite[\href{https://stacks.math.columbia.edu/tag/08DZ}{Lemma 08DZ}]{stacks-project}. Alternatively one may use the existence of adjoints \cite{Rizzardo-Adjoints_for_FM}.
\end{remark}

\begin{remark}\label{remark_perf}
  It is worthwhile to point out that working with $\D^b(X)$ can be troublesome in the singular case. If the kernel of an integral functor is not nice enough the derived tensor product can escape to the bounded above derived category. For some criteria for an object to define a functor between bounded derived categories, see \cite[Proposition 2.7]{Ruiperez-Relative_integral_functor_for_singular_fibrations_and_partners}. Alternatively, one could work directly with $\D^-(X)$.
\end{remark}

Although in our assumptions in Theorem \ref{Main Theorem} are $\perf X \cong \perf Y$, it so happens that we will need to pass to $\D^b(X)$ eventually. This is a delicate question since we are in the singular setting. Before we providing an answer we should first recall the notion of a compact object in a triangulated category and the notion of a compactly generated category. 

\begin{definition}\label{def_compact and compact generation}
 Given a triangulated category $\T$, an object $A$ is called compact if the functor $\hom_\T(A,-)$ commutes with all coproducts. The full thick subcategory of compact objects is denoted $\T^c$. $\T$ is said to be compactly generated if $(\T^c)^\perp =0$.
\end{definition}

\noindent It is well known (see \cite[Theorem 3.1.1]{BondalVandenBergh-Generators_and_representability}) that if $X$ is quasi-compact and quasi-separated, $\D(\qcoh X)$ is a compactly generated triangulated category, generated by a single perfect complex, and $\D(\qcoh X)^c = \perf X$. Compact generation also provides a very convenient characterization of the category $\T$ in question. The following is due to Neeman in \cite[Lemma 3.1]{Neeman-Grothendieck_duality_via_Brown_representability}. 

\begin{lemma}\label{lemma_coproducts and compact generation}
 Let $\mathcal{S} \subset \T$ be a full triangulated subcategory of a compactly generated triangulated category $\T$. If $\mathcal{S}$ contains $\T^c$ and is closed under all coproducts of its objects, then $\mathcal{S} \cong \T$.
\end{lemma}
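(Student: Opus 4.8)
The plan is to show, for every object $X \in \T$, that $X$ splits via a triangle into a piece lying in $\mathcal{S}$ and a piece that compact generation forces to vanish. Since $\mathcal{S}$ is full, triangulated, and closed under coproducts, it is a \emph{localizing} subcategory, and by hypothesis $\mathcal{S} \supseteq \T^c$. Because $\T$ is compactly generated I would fix a set $G \subseteq \T^c$ of compact objects that generates $\T$, in the sense that $\hom_\T(g, X[n]) = 0$ for all $g \in G$ and all $n$ forces $X = 0$ (for $\T = \D(\qcoh X)$ a single perfect complex suffices). From $G \subseteq \T^c \subseteq \mathcal{S}$ the smallest localizing subcategory $\langle G \rangle$ containing $G$ satisfies $\langle G \rangle \subseteq \mathcal{S}$, so it is enough to prove $\langle G \rangle = \T$.

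First I would invoke Neeman's Brown representability theorem to produce the Bousfield localization attached to $\langle G \rangle$: for each $X$ there is a triangle
\[ L \longrightarrow X \longrightarrow M \longrightarrow L[1], \qquad L \in \langle G \rangle, \quad M \in \langle G \rangle^{\perp}. \]
Next I would check that $\langle G \rangle^{\perp} = G^{\perp}$. One inclusion is immediate from $G \subseteq \langle G \rangle$; conversely, given $M \in G^{\perp}$, the subcategory $\{ A \in \T : \hom_\T(A, M[n]) = 0 \text{ for all } n \}$ is triangulated (by the long exact sequence of $\hom_\T(-, M[n])$) and closed under coproducts (a $\hom$ out of a coproduct is a product of zeros), hence localizing, and it contains $G$; therefore it contains $\langle G \rangle$, so $M \in \langle G \rangle^{\perp}$. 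Compact generation gives $G^{\perp} = 0$, so $M = 0$ and $X \cong L \in \langle G \rangle \subseteq \mathcal{S}$. As $X$ was arbitrary this yields $\langle G \rangle = \mathcal{S} = \T$.

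The main obstacle is the existence of the localization triangle, which is exactly the content of Neeman's theorem: one needs Brown representability in the compactly generated category $\T$ to guarantee that the inclusion $\langle G \rangle \hookrightarrow \T$ admits a right adjoint. If one prefers not to cite this as a black box, the triangle can be built directly by the standard cellular construction: take $L_0 = \bigoplus$ of shifts of generators indexed by all maps $g[n] \to X$, inductively cone off the maps from shifts of $G$, and set $L = \operatorname{hocolim} L_i$; since $\langle G \rangle$ is closed under the coproducts and cones used, $L \in \langle G \rangle$, and the construction is arranged so that the cone $M$ admits no nonzero maps from shifts of $G$, i.e. $M \in G^{\perp} = 0$. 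In either route the only genuine subtlety is the set-theoretic one of ensuring the indexing objects form an actual set, which is guaranteed here by the essential smallness of $\T^c = \perf X$.
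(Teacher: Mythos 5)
Your proof is correct. The paper offers no argument of its own for this lemma --- it simply cites Neeman's Lemma 3.1 --- and your route (reduce to the localizing subcategory $\langle G \rangle$ generated by a set of compact generators, produce the Bousfield localization triangle via Brown representability or the cellular tower, identify $\langle G \rangle^{\perp} = G^{\perp} = 0$, and conclude every object lies in $\mathcal{S}$) is exactly the standard argument underlying that citation, with the set-theoretic point about $\T^c$ being essentially small correctly flagged.
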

\noindent Our primary example is $\T^c = \perf X$ and $\T = \D(\qcoh X)$. In particular, any full subcategory of $\D(\qcoh X)$ which contains $\perf X$ and all coproducts of objects in $\perf X$ must be everything. 

\begin{lemma}\label{lemma_equivalences}
  Let $X$ and $Y$ be two projective varieties over the field $k$. Then given $\K \in \D(\qcoh (X \times Y))$, the following are equivalent:
  \begin{enumerate}
    \item $\Phi_\K^{X \to Y}: \D(\qcoh X) \overset{\sim}{\to} \D(\qcoh Y)$,
    \item $\Phi_\K^{X \to Y}: \D^b(X) \overset{\sim}{\to} \D^b(Y)$,
    \item $\Phi_\K^{X \to Y}: \perf X \overset{\sim}{\to} \perf Y$,
  \end{enumerate}
  Furthermore, if any one of the three above occurs, then $\K\in \D^b(X \times Y)$.
\end{lemma}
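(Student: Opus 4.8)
The plan is to treat $\perf X$ as the subcategory of compact objects of $\D(\qcoh X)$ and use compact generation to pass between the three categories, and to extract the boundedness of $\K$ from the Lunts--Orlov lemmas together with uniqueness of kernels. I would first prove $(1)\Leftrightarrow(3)$. For $(1)\Rightarrow(3)$, compactness is an intrinsic triangulated notion, so the equivalence $\Phi_\K^{X\to Y}$ preserves compact objects; as $\D(\qcoh X)^c=\perf X$ and $\D(\qcoh Y)^c=\perf Y$, it restricts to an equivalence $\perf X\to\perf Y$. For $(3)\Rightarrow(1)$ I would bootstrap from the generators: by Remark~\ref{remark_FM coproducts} the functor commutes with coproducts, and by hypothesis it sends the compact generators $\perf X$ into the compacts $\perf Y$. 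Fixing a compact $P$, the full subcategory of $C\in\D(\qcoh X)$ on which $\hom(P,C)\to\hom(\Phi_\K P,\Phi_\K C)$ is an isomorphism is triangulated, closed under coproducts (both $P$ and $\Phi_\K P$ are compact), and contains $\perf X$ (where $\Phi_\K$ is fully faithful), hence equals $\D(\qcoh X)$ by Lemma~\ref{lemma_coproducts and compact generation}; a second identical argument fixing the target and varying the source yields full faithfulness on all of $\D(\qcoh X)$. Full faithfulness makes the essential image a triangulated subcategory closed under coproducts and containing $\perf Y$, so Lemma~\ref{lemma_coproducts and compact generation} forces it to be everything, giving essential surjectivity.

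For the final assertion I would argue as follows. By $(3)$ the restriction $\Phi_\K^{X\to Y}|_{\perf X}$ is fully faithful, and since $X$ is integral its structure sheaf is torsion free, so Lemma~\ref{LuntsOrlov_FM for perf} supplies $\mathcal{M}\in\D^b(X\times Y)$ with $\Phi_{\mathcal{M}}^{X\to Y}|_{\perf X}\cong\Phi_\K^{X\to Y}|_{\perf X}$. Both $\Phi_\K$ and $\Phi_{\mathcal{M}}$ are integral functors on $\D(\qcoh X)$ commuting with coproducts that agree on the compact generators $\perf X$; by uniqueness of Fourier--Mukai kernels (a coproduct-preserving integral functor is determined up to isomorphism by its restriction to $\perf X$, and in turn determines its kernel) we conclude $\K\cong\mathcal{M}\in\D^b(X\times Y)$.

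Next I would handle $(2)\Rightarrow(3)$ by an intrinsic characterization of perfection inside the bounded category: an object $E\in\D^b(X)$ is perfect if and only if it has finite Ext-amplitude, i.e. for every $F\in\D^b(X)$ one has $\hom(E,F[n])=0$ for $|n|\gg0$. This condition is phrased purely in the triangulated category $\D^b(X)$, so it is preserved by the equivalence $\Phi_\K^{X\to Y}\colon\D^b(X)\to\D^b(Y)$; since it cuts out exactly $\perf X$ and $\perf Y$, the equivalence restricts to $\perf X\to\perf Y$.

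It remains to prove $(3)\Rightarrow(2)$, which I expect to be the main obstacle. Assuming $(3)$, hence $(1)$ and $\K\in\D^b(X\times Y)$, I would show $\Phi_\K$ restricts to $\D^b$. Because $\K$ is bounded and $\pi_Y$ is proper, a standard amplitude estimate (flat pullback along $\pi_X$, derived tensor bounded above, proper pushforward of bounded-above coherent complexes) gives $\Phi_\K(\D^b(X))\subseteq\D^-(Y)$; tensoring with a non-perfect $\K$ may destroy boundedness below (Remark~\ref{remark_perf}), so a priori the image is only bounded above. The equivalence matches $\perf X\leftrightarrow\perf Y$ and all Hom-groups, hence preserves the condition that $\hom(P,-[n])=0$ for $n\ll0$ for every perfect $P$, and $E\in\D^b(X)$ satisfies it; the crux is then the detection statement that an object of $\D^-(Y)$ on a projective $Y$ admitting no maps from perfect complexes in sufficiently negative degrees is automatically bounded below. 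I would prove this using projectivity—testing against the perfect twists $\O_Y(-m)$ and localizing by perfect Koszul complexes at points of the support to convert unboundedness of the cohomology sheaves into nonvanishing of $\hom(P,G[n])$ for arbitrarily negative $n$—the delicate point being to rule out infinite cancellation in the hypercohomology spectral sequence. Granting this, $\Phi_\K(\D^b(X))\subseteq\D^b(Y)$; applying the same reasoning to the quasi-inverse, whose kernel is bounded by the assertion just proven, gives the reverse inclusion, so the two functors restrict to mutually inverse equivalences $\D^b(X)\cong\D^b(Y)$.
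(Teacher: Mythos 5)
Most of your proposal coincides with the paper's own proof: the $(1)\Leftrightarrow(3)$ arguments (compactness is intrinsic; bootstrap from $\perf X$ via coproducts and Lemma \ref{lemma_coproducts and compact generation}) are exactly the paper's, your $(2)\Rightarrow(3)$ via homological finiteness is the paper's appeal to Rouquier's Lemma 7.49, and your derivation of $\K\in\D^b(X\times Y)$ from Lemma \ref{LuntsOrlov_FM for perf} plus uniqueness of kernels is the paper's argument as well (with the caveat that ``an integral functor determines its kernel'' is a genuine theorem -- the paper cites Canonaco--Stellari for it -- and not, as your parenthetical suggests, a formal consequence of the functor being determined by its restriction to $\perf X$).

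The genuine gap is in your $(3)\Rightarrow(2)$. The whole step rests on your ``detection statement'': an object of $\D^-(Y)$ with coherent cohomology receiving no maps from any fixed perfect complex in sufficiently negative degrees must be bounded below. You do not prove it, and the difficulty you yourself flag is precisely why it is not routine: the vanishing bound is allowed to depend on the perfect complex $P$, so you need a \emph{single} $P$ that detects cohomology sheaves in arbitrarily negative degrees, whereas testing against a fixed twist $\O_Y(-m)$ leaves open the possibility that the term $E_2^{0,q}=\Gamma(Y,\mathcal{H}^q(m))$ is killed by differentials landing in higher cohomology of still more negative cohomology sheaves; choosing $m\gg 0$ to prevent this makes $P$ depend on $q$, which the hypothesis does not control. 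The paper avoids this entirely by proving $(1)\Rightarrow(2)$ instead of $(3)\Rightarrow(2)$: it cites Rouquier's Proposition 7.46, which characterizes $\D^b(X)$ inside $\D(\qcoh X)$ as those $B$ with $\dim\oplus_i\hom(A,B[i])<\infty$ for every $A\in\perf X$; since the equivalence in (1) preserves compacts, it preserves this condition, and (2) follows with no amplitude estimates, no properness argument, and no use of $\K\in\D^b(X\times Y)$. Your gap is fillable -- the detection statement is essentially that proposition specialized to $\D^-$ -- but as written your proof is incomplete: either cite Rouquier's Proposition 7.46 for the detection step, or replace your route by the chain $(3)\Rightarrow(1)\Rightarrow(2)$ as the paper does.
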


One should also compare this result to \cite[Proposition 7.4]{CanonacoStellari-Uniqueness_of_DG_enhancements}.
\begin{proof}
  We first show that (1) implies (3). Indeed, $\perf X \subset \D(\qcoh X)$ are precisely the compact objects, namely those objects $A$ such that $\hom(A,-)$ commutes with all coproducts. It is then immediate that $\Phi^{X\to Y}_\K$ restricts to an equivalence $\perf X \overset{\sim}{\to} \perf Y$. This shows $(1) \implies (3)$. 
  
  Again assume that (1) holds. By \cite[Proposition 7.46]{Rouquier-Dimensions_of_triangulated_categories}, an object $B\in \D(\qcoh X)$ is in $\D^b(X)$ if and only if for all $A \in \perf X$ we have $$\dim \oplus_i \hom(A,B[i]) <\infty.$$ Since, by the previous paragraph, the integral functor preserves compact objects, applying this fact shows that $\Phi^{X \to Y}_\K$ restricts to an equivalence $$\Phi^{X \to Y}_\K|_{\D^b(X)}:\D^b(X) \overset{\sim}{\to} \D^b(Y).$$ This shows that $(1) \implies (2)$.

  Let us now assume (2). By \cite[Lemma 7.49]{Rouquier-Dimensions_of_triangulated_categories}, an object $A \in \D(\qcoh X)$ belongs to $\perf X$ if and only if $$\dim \oplus_i \hom(A,B[i]) < \infty$$ for all $B \in \D^b(X)$. The case when $A \in \D^b(X)$ implies that $\Phi^{X \to Y}_\K$ restricts to an equivalence $$\Phi_\K^{X \to Y}|_{\perf X}: \perf X \overset{\sim}{\to} \perf Y.$$ This shows $(2) \implies (3)$.

  To complete the proof of the first claim, it is enough to show that (3) implies (1). Consider the naive extension of this functor to $\Phi^{X \to Y}_\K:\D(\qcoh X) \to \D(\qcoh Y)$ by simply allowing the Fourier-Mukai functor to take inputs from $\D(\qcoh X)$. We claim that this functor is fully faithful on all of $\D(\qcoh X)$. Let $\langle \perf X \rangle^{\oplus} \subset \D(\qcoh X)$ be the minimal full triangulated subcategory containing $\perf X$ and all coproducts of perfect objects. By Lemma \ref{lemma_coproducts and compact generation}, $\langle \perf (X) \rangle^{\oplus} = \D(\qcoh X)$. 
  Consider the full triangulated subcategory of $\D(\qcoh X)$ defined by $$T = \{ Q \, | \, \Phi_\K^{X \to Y} : \hom(E,Q) \cong \hom (\Phi^{X \to Y}_\K(E),\Phi^{X \to Y}_\K(Q)), \text{ for all } E \in \perf X \}.$$ By assumption, this category contains $\perf X$, and since $E$ is compact and Fourier-Mukai functors commute with all coproducts, it also contains $\langle \perf X \rangle^{\oplus}$. Thus $T = \D(\qcoh X)$. Similarly, consider the full triangulated subcategory of $\D(\qcoh X)$ defined by $$S = \{Q \, | \, \Phi_\K^{X \to Y} : \hom(Q,F) \cong \hom (\Phi^{X \to Y}_\K(Q),\Phi^{X \to Y}_\K(F)), \text{ for all } F \in \D(\qcoh X) \}.$$ Since we have shown that $T = \D(\qcoh X)$, the same logic again implies $\langle \perf X \rangle^{\oplus} \subset S$, and so must be all of $\D(\qcoh X)$. This shows that the extension to the unbounded derived category is fully faithful.

  Now to see that the extension is essentially surjective, it again is enough by Lemma \ref{lemma_coproducts and compact generation} that the essential image be a full triangulated subcategory and is closed under coproducts of objects in $\perf Y$. The essential image is full as we have shown above that the functor is fully faithful, and the second condition follows as the Fourier-Mukai functor preserves all coproducts. This shows that (3) implies (1), and completes the proof of the first claim. Let us now turn to the second statement. 

  By restricting to $\perf X$, by the work of Lunts and Orlov in Lemma \ref{LuntsOrlov_FM for perf}, we conclude that there is an object $\E \in \D^b(X \times Y)$ such that $\Phi^{X \to Y}_\E \cong \Phi^{X \to Y}_\K$. Now by \cite[Remark 5.7]{CanonacoStellari-FM_supported}, the kernel is unique, and so $\E \cong \K$. Since this kernel realizes all three equivalences, the result follows.
\end{proof}

To conclude this subsection, we recall a well-known fact, namely if an integral functor between two projective varieties preserves structure sheaves of points, then the kernel of the functor is supported on the graph of a morphism between the varieties in question. In particular, if the integral functor is a Fourier-Mukai functor, the two varieties are isomorphic. This was originally known for smooth projective varieties (see for example, \cite[Corollary 5.23 and 6.14]{Huybrechts-FM_transforms_in_algebraic_geometry}), but the crux of the proof is the knowledge that the functor is represented by an object on the product. Since this is known for singular varieties, the same statement holds. 

\begin{lemma}\label{lemma_skyscraper sheaves give iso}
  Let $\Phi^{X \to Y}_\K:\D^b(X) \to \D^b(Y)$ be an integral functor between projective varieties such that for all closed points $x \in X$, $\Phi^{X \to Y}_\K(k(x)) \cong k(y)[r]$ for some closed point $y \in Y$ and $r\in \Z$. Then $\K \cong \sigma_* \mathcal{L} [m]$ for a section $\sigma:X \to X \times Y$ of $\pi_X$, $\mathcal{L} \in \operatorname{Pic} X$, and $m\in \Z$, and $$\Phi^{X \to Y}_\K(-) \cong \R f_* ( (-) \tens{} \mathcal{L})[m]$$ for a morphism $f:X \to Y$. Further, $f$ is an isomorphism if and only if $\Phi^{X \to Y}_\K$ is an equivalence of categories. 
\end{lemma}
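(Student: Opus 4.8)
The plan is to recover the geometry from the behaviour of $\K$ along the fibers of $\pi_X$, and then to read off the functor. First I would record the fiber formula. Writing $j_x\colon \{x\}\times Y\hookrightarrow X\times Y$ for the inclusion of the fiber over a closed point $x$, identified with $Y$ via $\pi_Y$, flat base change along $\pi_X$ together with the projection formula gives a natural isomorphism $\Phi^{X\to Y}_\K(k(x))\cong \L j_x^*\K$: indeed $\pi_X^*k(x)\cong j_{x*}\O_{\{x\}\times Y}$, so $\pi_X^*(k(x))\tens{\L}\K\cong j_{x*}\L j_x^*\K$, and applying $\R\pi_{Y*}$ while using that $\pi_Y\circ j_x$ is an isomorphism yields the claim. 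The hypothesis thus says exactly that every derived restriction $\L j_x^*\K$ is a shifted skyscraper $k(y_x)[r_x]$.

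The heart of the argument is a cohomology-and-base-change analysis of $\K$, carried out as in the smooth case of \cite[Corollary 5.23]{Huybrechts-FM_transforms_in_algebraic_geometry}, whose proof uses only that $X$ is reduced, connected, and projective. For the top non-vanishing cohomology sheaf one has $\mathcal{H}^b(\L j_x^*\K)\cong\mathcal{H}^b(\K)|_{\{x\}\times Y}$ with nothing above degree $b$; matching this against a single shifted skyscraper forces the shift to be constant, $r_x\equiv -b =: -m$, and, using that the supports of the cohomology sheaves fit together over the irreducible base $X$, that $\K$ is concentrated in the single degree $-m$. Writing $\K\cong\mathcal{F}[m]$, the vanishing of the higher derived restrictions against the fibers is precisely the local criterion for $\mathcal{F}$ to be flat over $X$, and base change identifies each fiber with the length-one sheaf $k(y_x)$. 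I expect this middle step to be the \emph{main obstacle}: proving single-degree concentration together with flatness without assuming $X$ smooth or normal requires careful bookkeeping, whereas the rest of the argument is formal once $\mathcal{F}$ is a flat family of length-one subschemes.

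Such a family is the same datum as a morphism. The scheme-theoretic support $Z=\operatorname{Supp}\mathcal{F}$ maps to $X$ by a finite morphism $\pi_X|_Z$ that is flat with fibers of constant length one, hence an isomorphism (a finite flat morphism of degree one is an isomorphism, so no normality of the possibly singular $X$ is needed). Its inverse composed with the inclusion is a section $\sigma=(\mathrm{id},f)\colon X\to X\times Y$ of $\pi_X$, the graph of $f=\pi_Y\circ\sigma\colon X\to Y$, and $\mathcal{F}\cong\sigma_*\mathcal{L}$ with $\mathcal{L}=\sigma^*\mathcal{F}$ a line bundle on $X$ (flat of fiber-rank one over the reduced base, hence invertible). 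This gives $\K\cong\sigma_*\mathcal{L}[m]$, and the projection formula, using $\pi_X\circ\sigma=\mathrm{id}$ and $\pi_Y\circ\sigma=f$, computes
$$\Phi^{X\to Y}_\K(-)\cong\R\pi_{Y*}\big(\sigma_*(\sigma^*\pi_X^*(-)\tens{}\mathcal{L})\big)[m]\cong\R f_*\big((-)\tens{}\mathcal{L}\big)[m].$$

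Finally, the equivalence criterion. If $f$ is an isomorphism then each of $\R f_*$, $(-)\tens{}\mathcal{L}$, and $[m]$ is an autoequivalence, so $\Phi^{X\to Y}_\K$ is an equivalence. Conversely, if $\Phi^{X\to Y}_\K$ is an equivalence then, cancelling the autoequivalences $(-)\tens{}\mathcal{L}$ and $[m]$, the pushforward $\R f_*\colon\D^b(X)\to\D^b(Y)$ is an equivalence and in particular fully faithful. A fully faithful functor reflects isomorphisms, and since $\Phi^{X\to Y}_\K(k(x))\cong k(f(x))[m]$ while $k(x)\not\cong k(x')$ for $x\neq x'$, this forces $f$ to be injective on closed points; hence $f$ is quasi-finite and, being proper, finite, so $\R f_*=f_*$ is exact and restricts to an equivalence of abelian categories $\coh X\to\coh Y$. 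Gabriel's theorem then identifies $X$ with $Y$ via $f$, so $f$ is an isomorphism.
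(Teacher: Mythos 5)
Your proposal is correct in outline, but it diverges from the paper in both halves, so a comparison is worth recording. For the first statement the paper proves essentially nothing itself: it cites the standard references (Hille--Van den Bergh, Bartocci et al., and steps 2--3 of Orlov's theorem on abelian varieties), recording only the byproduct that $\sigma$ identifies $X$ with $\operatorname{supp}\K$. What you sketch --- the fiber formula $\Phi^{X\to Y}_\K(k(x)) \cong \L j_x^*\K$, constancy of the shift, concentration of $\K$ in a single degree, flatness via the local criterion, and then ``flat family of length-one skyscrapers equals section'' --- is precisely the content of those references, and you are right both that this is the technical heart and that none of it requires smoothness of $X$ (Nakayama, openness of the Tor-amplitude locus, and the local criterion of flatness are regularity-free), which is exactly why the paper may cite these results in the singular setting. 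Where you genuinely part ways with the paper is the converse of the equivalence criterion. The paper's argument is short and stays inside the Fourier--Mukai formalism: the inverse functor $\Phi^{Y\to X}_\E$ satisfies the same hypotheses, so the first statement applies to it and yields a section $\zeta: Y \to X\times Y$, and $g = \pi_X\circ\zeta$ is inverse to $f$. You instead show $f$ is injective on closed points, hence quasi-finite, hence finite, deduce that $f_* = \R f_*$ is a $t$-exact equivalence, and invoke Gabriel. This route works but needs two small repairs: (i) to restrict $f_*$ to an equivalence $\coh X \to \coh Y$ you must note that the inverse of a $t$-exact equivalence is again $t$-exact (true, since $t$-exact functors commute with taking cohomology and equivalences reflect zero objects, but it deserves a line); and (ii) Gabriel's theorem produces \emph{some} isomorphism $X \cong Y$, whereas the lemma asserts that $f$ itself is one --- to close this you must add that the isomorphism furnished by the reconstruction agrees with $f$ on closed points, because $f_*k(x) \cong k(f(x))$, and that two morphisms of varieties agreeing on closed points coincide. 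The paper's route avoids both issues; yours avoids having to check that the inverse functor again takes skyscrapers to shifted skyscrapers (which silently requires surjectivity of $f$), so each approach has an honest advantage.
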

Recall that the support of a complex $\K \in \D^b(X)$, denoted $\operatorname{supp} \K$, is the union of the supports of its cohomology sheaves. In this scenario the support is a closed subscheme.
\begin{proof}
  The proof of the first statement can be found in several places, for example, \cite[Proposition 4.2]{HilleVandenBergh-FM_transforms}, \cite[Corollary 1.12]{Bartocci-FM_in_geometry_and_physics}, or originally, in steps 2 and 3 of the proof of \cite[Theorem 2.10]{Orlov-Equivalences_of_abelian_varieties} where it is stated in the setting of abelian varieties. We note here that as a byproduct of the proof, the section $\sigma:X \to X \times Y$ is an isomorphism between $X$ and $\operatorname{supp} \K$. 

  By letting $f= \pi_Y \circ \sigma$, the projection formula easily shows that $$\Phi^{X \to Y}_\K (-) \cong \R f_* ( (-) \tens{} \mathcal{L})[m],$$ and it is clear that this functor is an equivalence if and only if $f$ is an isomorphism. Indeed, the ``if'' direction is obvious, for the converse, the inverse functor $\Phi^{Y \to X}_\E$ must satisfy the same hypotheses, and hence there is a section $\zeta:Y \to X \times Y$ realizing an isomorphism between $Y$ and $\operatorname{supp} \K$. Setting $g = \pi_X \circ \zeta$, it follows that $g$ is the inverse to $f$. 
\end{proof}

\begin{remark}
  There are other ways to see that an equivalence satisfying $\Phi^{X \to Y}_\K(k(x)) \cong k(y)[r]$ implies that $X \cong Y$. For example, the reader should see \cite[Lemma 3.5]{Favero-Reconstruction_and_finiteness} for a proof which is closer in spirit to the original proof of Bondal and Orlov in \cite{BondalOrlov-Reconstruction_from_the_derived_category}.
\end{remark}

\subsection{Rouquier Functors}
All results in this section are contained in \cite[Section 4]{Rouquier-Dimensions_of_triangulated_categories} and \cite[Section 5]{Ballard-Derived_categories_of_singular_schemes_and_reconstruction}. 

Given a smooth projective variety $X$, the bounded derived category $\D^b(X)$ possesses a distinguished autoequivalence given by $$S_X(-) = (-) \tens{} \omega_X[\dim X] : \D^b(X) \to \D^b(X).$$ This functor captures the notion of Serre duality, and as such is called a Serre functor. More generally given a $k$-linear triangulated category $\T$, a Serre functor is a triangulated autoequivalence $S:\T \to \T$ for which there are natural isomorphisms $$\eta_{A,B}:\hom_{\T} (B,S(A)) \to \hom_{\T}(A,B)^*,$$ where ``$*$" indicates the vector space dual. Typically the presence and properties of a Serre functor on a triangulated category $\T$ has significant consequences for the structure of $\T$. 

The first question is that of existence. Fixing $A \in \T$, we see that if the functor $\hom_\T(A,-)^*$ is a representable functor for any such $A$, with representing object in $\T$, then $\T$ has a so-called weak Serre functor, that is, a Serre functor that is not necessarily an autoequivalence. The question of representability can be difficult, but if $\T$ is compactly generated and cocomplete, a convenient criterion exists known as Brown representability:

\begin{theorem}[\cite{Neeman-Grothendieck_duality_via_Brown_representability}, Theorem 3.1]
  Let $\T$ be a $k$-linear, compactly generated, and cocomplete triangulated category. If $H:\T \to \text{Vect}_k$ is a contravariant functor sending all distinguished triangles to long exact sequences, and $H$ sends all coproducts to products, then $H$ is representable.
\end{theorem}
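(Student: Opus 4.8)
The plan is to realize the representing object as a homotopy colimit of a sequence $B_0 \to B_1 \to \cdots$ that approximates $H$ better and better on a fixed set $\mathcal{G}$ of compact generators (which exists because $\T$ is compactly generated), and then bootstrap from the generators to all of $\T$. The guiding principle is the contravariant Yoneda philosophy: a natural transformation $\hom_\T(-,B) \to H$ is precisely the data of an element of $H(B)$, so the construction proceeds by building the $B_i$ together with compatible distinguished elements $b_i \in H(B_i)$, and then assembling these into an element $b \in H(B)$.

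First I would construct the base object. Set
$$B_0 = \bigoplus_{G \in \mathcal{G}} \ \bigoplus_{x \in H(G)} G,$$
one copy of $G$ for each element of $H(G)$. Since $H$ sends coproducts to products, $H(B_0) = \prod_{G,x} H(G)$, so there is a canonical element $b_0 \in H(B_0)$ whose $(G,x)$-component is $x$; via Yoneda this defines a natural transformation $\hom_\T(-,B_0) \to H$, and by design the induced map $\hom_\T(G,B_0) \to H(G)$ is surjective for every $G \in \mathcal{G}$. The inductive step then kills the kernel: given $B_i$ with $b_i \in H(B_i)$, form the coproduct $\bigoplus g$ indexed by all $g \colon G \to B_i$ (with $G \in \mathcal{G}$) satisfying $H(g)(b_i)=0$, and let $B_{i+1}$ be the cone of $\bigoplus g \colon \bigoplus G \to B_i$. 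Applying the cohomological functor $H$ to the defining triangle and using that $(\bigoplus g)^*(b_i)$ has all components $H(g)(b_i)=0$, exactness produces a lift $b_{i+1} \in H(B_{i+1})$ restricting to $b_i$.

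Next I would set $B = \operatorname{hocolim}_i B_i$, defined by the telescope triangle $\bigoplus_i B_i \xrightarrow{1-\mathrm{shift}} \bigoplus_i B_i \to B$. Applying $H$ and using $H(\bigoplus_i B_i)=\prod_i H(B_i)$, the compatible system $(b_i)$ lies in the kernel of the induced endomorphism $(1-\mathrm{shift})^*$, hence lifts to an element $b \in H(B)$, giving the desired transformation $\eta \colon \hom_\T(-,B) \to H$. To check $\eta$ is an isomorphism on generators, I use that compactness gives $\hom_\T(G,B) = \operatorname*{colim}_i \hom_\T(G,B_i)$: surjectivity of $\eta_G$ is inherited from the $B_0$ stage, while injectivity follows because any $f \colon G \to B$ with $\eta_G(f)=0$ factors through some $B_i$ as a kernel element, which by construction becomes zero after the transition $B_i \to B_{i+1}$. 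Finally, the full subcategory of objects $A$ on which $\eta_A$ is an isomorphism is triangulated (five lemma, since both $\hom_\T(-,B)$ and $H$ are cohomological) and closed under coproducts (both send coproducts to products); as it contains $\mathcal{G}$, compact generation—in the spirit of Lemma \ref{lemma_coproducts and compact generation}—forces it to be all of $\T$, so $H \cong \hom_\T(-,B)$ is representable.

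The step I expect to require the most care is the passage to the homotopy colimit, specifically producing the lift $b \in H(B)$ of the system $(b_i)$: one must verify that the telescope triangle interacts correctly with $H$ and that no $\varprojlim^1$-type obstruction interferes. Here it does not, because we only need surjectivity of $H(B)$ onto $\ker(1-\mathrm{shift})^*$, which the long exact sequence supplies directly; the inverse-limit subtleties that normally accompany Milnor sequences are irrelevant for the mere existence of the lift. A secondary point of care is the final bootstrapping, where closure under coproducts must be promoted to thickness (via the Eilenberg swindle) so that the subcategory genuinely contains the thick closure of $\mathcal{G}$ before Lemma \ref{lemma_coproducts and compact generation} applies.
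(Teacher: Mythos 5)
Your proof is correct in all essentials, but there is nothing in the paper to compare it against: the paper does not prove this theorem at all --- it imports it wholesale as Theorem 3.1 of the cited reference of Neeman, where it is then used as a black box to produce Rouquier functors. What you have reconstructed is, up to minor reorganization, the canonical argument from the literature (Brown's iterative approximation, adapted to compactly generated triangulated categories by Neeman): build $B_0$ from the generators and the elements of $H$, kill the kernel inductively by coning, pass to the telescope, and bootstrap from generators to all of $\T$. Your remark that no $\varprojlim^1$-type obstruction can interfere is also right, since only surjectivity of $H(B)$ onto the kernel of $(1-\mathrm{shift})^*$ is needed, and exactness supplies it. Two points of hygiene remain, both standard. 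First, in the final step the five lemma needs $\eta$ to be an isomorphism on \emph{shifts} of the outer terms of a triangle, because the long exact sequence of a contravariant cohomological functor runs through $A'[1]$, $A''$, $A'$, $A''[-1]$, and so on; thus the subcategory you should consider is $\{A \,:\, \eta_{A[n]} \text{ is an isomorphism for all } n \in \mathbb{Z}\}$, or equivalently you should take $\mathcal{G}$ closed under shifts, which is harmless since shifts of compact objects are compact. Second, the paper's Lemma \ref{lemma_coproducts and compact generation} requires the subcategory to contain all of $\T^c$, not merely a generating set $\mathcal{G}$; rather than promoting closure under coproducts to thickness via the Eilenberg swindle and then identifying the thick closure of $\mathcal{G}$ with $\T^c$ (which would drag in Neeman's localization theorem), the cleanest route, and the one consistent with the paper's definition of compact generation as $(\T^c)^\perp = 0$, is simply to take $\mathcal{G}$ to be a set of representatives of all isomorphism classes of compact objects, closed under shifts, from the start. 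With those adjustments your argument is complete and agrees with the proof in the source the paper cites.
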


\noindent It is well-known that the functors $\hom_\T(A,-)$ are always cohomological, that is, they satisfy the hypotheses of Brown representability; so to conclude that $\T$ has a weak Serre functor by this approach, it follows that the functor $\hom_\T(A,-)$ must commute with all coproducts. This is the same as saying that $A$ is compact in $\T$, and so it follows that if this functor is represented by an object in $\T^c$, the category $\T^c$ will have a weak Serre functor.  

Let us now specialize to the geometric case, where $\T = \D(\qcoh X)$. It is well-known that $X$ is regular if and only if $\perf X \cong \D^b(X)$, and so in the singular case we no longer expect $\D^b(X)$ to possess a Serre functor. However it is still possible for $\perf X$; if $X$ is Gorenstein, this is precisely what occurs.

\begin{proposition}[\cite{Ballard-Derived_categories_of_singular_schemes_and_reconstruction, SalasSalas-Reconstruction_from_the_derived_category}]
  If $X$ is a proper variety over a field $k$, the category $\perf X$ has a Serre functor if and only if $X$ is Gorenstein. In particular, the functor has the formula: $S_X(-) = (-) \tens{} \omega_X [\dim X]$.
\end{proposition}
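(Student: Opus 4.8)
The plan is to deduce both directions from Grothendieck--Serre duality together with the uniqueness of representing objects. Since $X$ is of finite type over the field $k$ it carries a normalized dualizing complex $\omega_X^\bullet \in \D^b(X)$, and for $E \in \perf X$ and $F \in \D^b(X)$ I would first record the natural isomorphism
$$\hom_X(F,\, E \tens{\L} \omega_X^\bullet) \cong \hom_X(E,F)^*.$$
This comes from the global duality isomorphism $\R\hom_X(M,\omega_X^\bullet) \cong \R\Gamma(X,M)^*$ applied to $M = E^\vee \tens{\L} F$: adjunction rewrites the left-hand side as $\R\hom_X(F, \R\mathcal{H}om(E^\vee,\omega_X^\bullet))$, and because $E$ is perfect one has $\R\mathcal{H}om(E^\vee,\omega_X^\bullet) \cong E \tens{\L}\omega_X^\bullet$ and $\R\Gamma(X, E^\vee \tens{\L} F) \cong \R\hom_X(E,F)$; taking $H^0$ yields the display. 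All of these isomorphisms are bi-natural in $E$ and $F$, which is exactly the naturality demanded of a Serre pairing.

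Granting this, the ``if'' direction is immediate: if $X$ is Gorenstein then $\omega_X^\bullet \cong \omega_X[\dim X]$ with $\omega_X$ a line bundle, so $(-) \tens{} \omega_X[\dim X]$ preserves $\perf X$ and is a triangulated autoequivalence with inverse $(-) \tens{} \omega_X^{-1}[-\dim X]$. Substituting $\omega_X^\bullet = \omega_X[\dim X]$ into the display exhibits $S_X(-) = (-) \tens{} \omega_X[\dim X]$ as a Serre functor, which simultaneously gives the stated formula.

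For the converse, suppose $\perf X$ admits a Serre functor $S$. For fixed $E \in \perf X$, the defining isomorphism $\hom(F,S(E)) \cong \hom(E,F)^*$ and the display above show that $S(E)$ and $E \tens{\L} \omega_X^\bullet$ represent the same functor $F \mapsto \hom(E,F)^*$ on $\perf X$. I would upgrade this to an isomorphism in $\D(\qcoh X)$ as follows: since $S$ is an autoequivalence of $\perf X$ we may take $F = S(E) \in \perf X$, and the image of $\mathrm{id}_{S(E)}$ under the composite isomorphism is a morphism $\phi\colon S(E) \to E \tens{\L} \omega_X^\bullet$ which induces an isomorphism on $\hom(F,-)$ for every $F \in \perf X$, hence (replacing $F$ by its shifts) on every $\hom(F,-[i])$. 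The cone of $\phi$ therefore lies in $(\perf X)^\perp$, which vanishes because $\D(\qcoh X)$ is compactly generated with compact objects exactly $\perf X$ (Definition \ref{def_compact and compact generation}); thus $\phi$ is an isomorphism and $S(E) \cong E \tens{\L} \omega_X^\bullet$.

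Taking $E = \O_X$ gives $\omega_X^\bullet \cong S(\O_X) \in \perf X$, so the dualizing complex is perfect. Since a dualizing complex satisfies $\R\mathcal{H}om(\omega_X^\bullet,\omega_X^\bullet) \cong \O_X$, perfectness forces $\omega_X^\bullet \tens{\L} (\omega_X^\bullet)^\vee \cong \O_X$, i.e. $\omega_X^\bullet$ is a $\tens{}$-invertible object; a scheme with invertible dualizing complex is Gorenstein, and locally that complex is a shifted line bundle, so $\omega_X^\bullet \cong \omega_X[\dim X]$ with $\omega_X \in \operatorname{Pic} X$. Together with $S(-) \cong (-) \tens{\L} \omega_X^\bullet$ this recovers the formula. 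The main obstacle is marshalling the duality inputs correctly, in particular the internal-Hom manipulation producing the Serre pairing for perfect $E$, together with the standard but non-trivial commutative-algebra fact that a perfect (equivalently, invertible) dualizing complex forces Gorensteinness; the compact-generation step is the one genuinely categorical point, and is where closure of $\perf X$ under shifts and $(\perf X)^\perp = 0$ are essential.
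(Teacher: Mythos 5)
Your proof is correct and takes essentially the same route as the cited references and the Rouquier-functor machinery this paper builds around the statement: Grothendieck--Serre duality shows that $(-) \tens{\L} f^!\O_{\spec k}$ represents the Serre pairing on $\perf X$, so by uniqueness of representing objects any Serre functor must coincide with it, forcing $S(\O_X) \cong f^!\O_{\spec k}$ to be perfect and hence $X$ to be Gorenstein, with the formula following from $f^!\O_{\spec k} \cong \omega_X[\dim X]$. I see no gaps.
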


The failure of $\perf X$ to have a Serre functor in the non-Gorenstein case is not due to a failure of existence, but rather the representing objects for the functors $\hom(A,-)$ no longer lie in $\perf X$. This observation is due to Rouquier \cite[Section 4]{Rouquier-Dimensions_of_triangulated_categories}, where the following definition was given. 
\begin{definition}
  Let $\mathcal{S}$ and $\T$ be arbitrary $k$-linear triangulated categories, and $F:\mathcal{S} \to \T$ be a $k$-linear triangulated functor. A Rouquier functor associated to $F$ is a $k$-linear triangulated functor $R_F : \mathcal{S} \to \T$ for which there are natural isomorphisms $$\eta_{A,B}: \hom_{\T} (B,R_F(A)) \to \hom_{\T}(F(A),B)^*.$$
\end{definition}

In particular, if $\mathcal{S} = \T$ and $F$ is the identity functor, then a Rouquier functor is just a weak Serre functor. As we had discussed above, the existence of a Serre functor is closely related to the representability of the functors $\hom_\T(A,-)^*$, and in the case of a Rouquier functor, a similar result also holds.

\begin{lemma}[\cite{Ballard-Derived_categories_of_singular_schemes_and_reconstruction}, Lemma 5.7]
  A necessary and sufficient condition for the existence of $R_F$ is the representability of the functors $\hom_{\T}(F(A),-)^*$ for $A$ an object of $\mathcal{S}$. If it exists, its unique. 
\end{lemma}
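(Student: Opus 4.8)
The plan is to run the standard Yoneda-representability argument, with the real content lying in checking that the representing objects organize into a \emph{triangulated} functor. Necessity is immediate: if $R_F$ exists, the defining isomorphism $\eta_{A,-}$ exhibits $R_F(A)$ as a representing object for the contravariant functor $H_A(-) := \hom_\T(F(A),-)^*$, so each $H_A$ is representable.

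For sufficiency, assume each $H_A$ is representable and choose for every $A \in \mathcal{S}$ an object $R(A) \in \T$ with a natural isomorphism $\eta_{A,-} : \hom_\T(-,R(A)) \cong H_A$. Set $R_F(A) := R(A)$ on objects. Given a morphism $f : A \to A'$, the map $F(f)$ induces by precomposition a natural transformation $\hom_\T(F(A'),-) \Rightarrow \hom_\T(F(A),-)$, whose $k$-linear dual is a natural transformation $H_A \Rightarrow H_{A'}$; transporting along $\eta$ and applying the contravariant Yoneda lemma produces a unique morphism $R_F(f) : R(A) \to R(A')$. That $R_F$ respects identities, composition, and the $k$-linear structure then follows formally from functoriality of $F$, of $\hom_\T$, and of $(-)^*$, together with the uniqueness clause in Yoneda.

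The substantive step is showing $R_F$ is triangulated. Compatibility with the shift follows by combining that $F$ is triangulated with the identity $\hom_\T(X[1],B) \cong \hom_\T(X,B[-1])$ and two applications of $\eta$ to obtain natural isomorphisms $\hom_\T(B,R_F(A[1])) \cong \hom_\T(B,R_F(A)[1])$, whence $R_F(A[1]) \cong R_F(A)[1]$ by Yoneda. For triangles, start with a distinguished triangle $A \xrightarrow{u} B \xrightarrow{v} C \to A[1]$, apply the triangulated functor $F$, and for each $D \in \T$ apply the cohomological functor $\hom_\T(-,D)$ followed by the exact dual $(-)^*$; via $\eta$ this yields a long exact sequence in the groups $\hom_\T(D,R_F(A))$, $\hom_\T(D,R_F(B))$, $\hom_\T(D,R_F(C))$ and their shifts, natural in $D$, whose maps are induced by applying $R_F$ to the morphisms of the original triangle. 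Now complete $R_F(u)$ to a distinguished triangle $R_F(A) \to R_F(B) \to Z \to R_F(A)[1]$. Since $R_F(v) R_F(u) = R_F(vu) = 0$, the morphism $R_F(v)$ factors through $Z$, giving $\phi : Z \to R_F(C)$; applying $\hom_\T(D,-)$ produces a ladder between the triangle's long exact sequence and the dualized one, which agrees (up to the identity) in every term except the $C$/$Z$ spot. The five lemma then shows $\hom_\T(D,\phi)$ is an isomorphism for every $D$, so $\phi$ is an isomorphism by Yoneda and the image triangle $R_F(A) \to R_F(B) \to R_F(C) \to R_F(A)[1]$ is distinguished.

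Uniqueness is then formal: if $R_F$ and $R_F'$ both satisfy the defining isomorphisms, then for each $A$ both $R_F(A)$ and $R_F'(A)$ represent $H_A$, so Yoneda gives canonical isomorphisms $R_F(A) \cong R_F'(A)$, and naturality of the two families $\eta$ forces these to assemble into an isomorphism of functors $R_F \cong R_F'$. I expect the triangulated-functor verification to be the main obstacle, and within it the genuinely delicate point is constructing the comparison map $\phi : Z \to R_F(C)$ and checking that the resulting ladder of long exact sequences commutes — including the square involving the connecting homomorphism — so that the five lemma legitimately applies.
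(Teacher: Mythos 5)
Your necessity, uniqueness, and construction-of-the-functor steps (Yoneda on objects and morphisms, $k$-linearity, compatibility with shifts) are correct and are exactly the standard route; note that the paper under review gives no proof of this lemma at all --- it cites Ballard, whose argument in turn adapts Bondal--Kapranov's proof that Serre functors are exact. The genuine gap is the step you yourself flag at the end and then leave unresolved: the verification that $R_F$ sends distinguished triangles to distinguished triangles. Your comparison map $\phi\colon Z \to R_F(C)$ is obtained by factoring $R_F(v)$ through the cone $Z$ of $R_F(u)$, and this guarantees commutativity of the square involving $R_F(B)$ and nothing more. To invoke the five lemma you also need the square involving the connecting maps, i.e.\ $R_F(w)\circ\phi = h$ (up to the shift identification), and this does not follow from your construction: $\phi$ is unique only up to morphisms factoring through $h$, and while a dual argument (exactness of the dualized sequence at $\hom_{\T}(Z,R_F(A)[1])$ together with $R_F(u)[1]\circ h = 0$) produces a possibly \emph{different} map making the connecting square commute, nothing formal forces a single choice of $\phi$ to satisfy both conditions at once. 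Since the problematic square flanks the very term you are comparing, there is no way to run the five lemma around it.

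This cannot be dismissed as a technicality, because what your argument actually establishes --- that the candidate triangle $R_F(A) \to R_F(B) \to R_F(C) \to R_F(A)[1]$ induces long exact sequences under every $\hom_{\T}(D,-)$ --- is not formally equivalent to being distinguished; if it were, the exactness of Serre functors would be a triviality rather than the content of Bondal--Kapranov's Proposition 3.3, and this lemma of Ballard's would need no proof. The missing ingredient is the compatibility of $\eta$ with composition in both variables (its bifunctoriality), which is precisely what the cited proofs exploit: via the induced trace pairings one can define the comparison map directly out of the duality, as a functional in $\hom_{\T}(Z,R_F(C)) \cong \hom_{\T}(F(C),Z)^{*}$, and then check from the pairing identities that it is compatible with \emph{both} squares of the ladder, rather than appealing to the weak-cokernel property of the cone. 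So your skeleton is the right one, but the step you describe as ``genuinely delicate'' is not a final verification to be checked --- it is the theorem, and as written your proof does not contain it.
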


\noindent In particular, if the triangulated category is compactly generated, then the previous discussion implies the following.

\begin{lemma}[\cite{Rouquier-Dimensions_of_triangulated_categories}, Corollary 4.23]
  Let $\T$ be a $k$-linear, compactly generated, and cocomplete triangulated category. Then the Rouquier functor associated to the inclusion $\T^c \to \T$ exists. Furthermore, if $\T^c$ has finite dimensional Hom spaces, then the Rouquier functor is faithful.
\end{lemma}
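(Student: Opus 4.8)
The plan is to reduce the existence statement to the representability criterion recalled just above (\cite[Lemma 5.7]{Ballard-Derived_categories_of_singular_schemes_and_reconstruction}): for the inclusion $F\colon \T^c \hookrightarrow \T$, a Rouquier functor $R_F$ exists if and only if each functor $\hom_\T(F(A),-)^* = \hom_\T(A,-)^*$ with $A \in \T^c$ is representable, and when it exists it is unique. So it suffices to show that for every compact $A$ the contravariant functor
\[
  H_A := \hom_\T(A,-)^* \colon \T \longrightarrow \text{Vect}_k
\]
is representable, and this is precisely the setting of Brown representability (\cite[Theorem 3.1]{Neeman-Grothendieck_duality_via_Brown_representability}), since $\T$ is $k$-linear, compactly generated, and cocomplete by hypothesis.

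The main step is then to verify the two hypotheses of Brown representability for $H_A$. First, $\hom_\T(A,-)$ is a covariant cohomological functor, hence carries distinguished triangles to long exact sequences of $k$-vector spaces; as $k$ is a field the dualization $(-)^*$ is exact, so $H_A$ again sends distinguished triangles to long exact sequences. The crucial point---and the only place compactness is used---is the behaviour on coproducts: because $A \in \T^c$, the functor $\hom_\T(A,-)$ commutes with arbitrary coproducts, giving $\hom_\T(A,\coprod_i B_i) \cong \bigoplus_i \hom_\T(A,B_i)$, and dualizing converts this direct sum into the product $\prod_i \hom_\T(A,B_i)^* = \prod_i H_A(B_i)$. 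Thus $H_A$ sends coproducts to products, Brown representability applies, and $H_A$ is represented by an object I call $R_F(A)$; by the cited criterion these assemble into the desired (unique, triangulated) functor $R_F$, proving the first assertion.

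For faithfulness, fix $A,A' \in \T^c$ and $\phi\colon A \to A'$ with $R_F(\phi)=0$; I must show $\phi=0$. The defining isomorphism $\eta_{A,B}\colon \hom_\T(B,R_F(A)) \xrightarrow{\sim} \hom_\T(A,B)^*$ is natural in $A$, and unwinding this naturality identifies $R_F(\phi)_*\colon \hom_\T(B,R_F(A)) \to \hom_\T(B,R_F(A'))$ with the dual of precomposition, namely $\bigl((-)\circ\phi\bigr)^*\colon \hom_\T(A,B)^* \to \hom_\T(A',B)^*$, for every $B$. Hence $R_F(\phi)=0$ forces the dual of $(-)\circ\phi\colon \hom_\T(A',B)\to\hom_\T(A,B)$ to vanish for all $B$; here the hypothesis that $\T^c$ is Hom-finite makes dualization a perfect duality, so a dual map vanishes only if the original does (injectivity of the canonical map into the double dual already suffices). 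Therefore $(-)\circ\phi=0$ on $\hom_\T(A',B)$ for all $B$, and taking $B=A'$ and evaluating on $\mathrm{id}_{A'}$ yields $\phi = \mathrm{id}_{A'}\circ\phi = 0$, as required. I expect the one delicate point to be this last step: correctly matching $R_F(\phi)_*$ with the dualized precomposition map through the naturality of $\eta$ in its first variable. The existence half, by contrast, is a direct application of Brown representability, with compactness of $A$ supplying exactly the coproduct-to-product behaviour the theorem demands.
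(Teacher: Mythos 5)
Your proof is correct, and on the existence half it follows exactly the route the paper intends: the paper's own justification for this lemma is nothing more than the discussion preceding it, namely Brown representability applied to the contravariant functors $\hom_\T(A,-)^*$ for $A$ compact (cohomologicity, plus the coproduct-to-product conversion supplied by compactness of $A$ and exactness of $(-)^*$ over a field), assembled into a unique triangulated functor via the representability criterion of Ballard's Lemma 5.7 --- which is precisely your argument. Where you genuinely add something is faithfulness: the paper does not argue this at all (it is simply part of the citation to Rouquier), whereas you give a correct direct argument from the naturality of $\eta$ in the first variable. Two remarks on that part. First, your appeal to Hom-finiteness is misplaced: since $B$ ranges over all of $\T$, the spaces $\hom_\T(A',B)$ need not be finite dimensional even when $\T^c$ is Hom-finite, so ``perfect duality'' is not actually available; however, as your parenthetical correctly observes, only the injectivity of the canonical map $V \to V^{**}$ is needed, and that holds for arbitrary vector spaces over a field. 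So your argument stands, and in fact it never uses the finiteness hypothesis at all --- you prove the faithfulness assertion under weaker assumptions than stated. Second, the step you flagged as delicate --- identifying $R_F(\phi)_*$ with the dualized precomposition map --- is sound, because naturality of $\eta_{A,B}$ in $A$ as well as in $B$ is built into the paper's definition of a Rouquier functor (and is automatic from the Yoneda-style construction of $R_F$ on morphisms), so the commuting square you need is available by definition.
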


Our main example is the case of projective schemes. For the following, see Proposition 7.47 and Remark 7.48 in \cite{Rouquier-Dimensions_of_triangulated_categories}, see also \cite[Example 5.12]{Ballard-Derived_categories_of_singular_schemes_and_reconstruction} for more details.

\begin{lemma}\label{lemma_Rouquier functor is dualizing complex}
  Let $f:X \to \spec k$ be a projective scheme over the field and $\iota:\perf X \hookrightarrow D(\qcoh X)$ the inclusion functor. Then the Rouquier functor $R_X := R_\iota$ exists and is isomorphic to $$(-) \tens{\L} f^! \O_{\spec k}.$$ Furthermore, it maps $\perf X$ into $\D^b(X)$.
\end{lemma}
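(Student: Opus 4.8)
The plan is to split the statement into three tasks: existence, identification of the functor, and the image claim, with the bulk of the work being a Grothendieck duality computation that verifies the defining natural isomorphism for the candidate functor $(-)\tens{\L} f^!\O_{\spec k}$.

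First, existence is immediate from the general criterion recalled just above. Since $X$ is projective, $\D(\qcoh X)$ is compactly generated with $\D(\qcoh X)^c=\perf X$ and is cocomplete, so the Rouquier functor $R_X=R_\iota$ associated to the inclusion $\iota:\perf X\hookrightarrow\D(\qcoh X)$ exists and is unique by Corollary 4.23 of \cite{Rouquier-Dimensions_of_triangulated_categories}. By that uniqueness clause it then suffices to exhibit \emph{any} functor satisfying the defining natural isomorphisms $\hom(B,R_X(A))\cong\hom(A,B)^*$ for $A\in\perf X$ and $B\in\D(\qcoh X)$, and I claim $(-)\tens{\L} f^!\O_{\spec k}$ is such a functor. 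Write $\omega_X^\bullet:=f^!\O_{\spec k}$ for brevity.

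The essential input is Grothendieck duality: by the Brown representability argument of \cite{Neeman-Grothendieck_duality_via_Brown_representability}, for the proper morphism $f:X\to\spec k$ the pushforward $\R f_*$ admits a right adjoint $f^!$, giving a natural isomorphism $\hom_{\D(\qcoh X)}(F,\omega_X^\bullet)\cong\hom_{\D(k)}(\R f_* F,\O_{\spec k})$. Since $\R f_* F=\R\Gamma(X,F)$ is a complex of $k$-vector spaces and $k$ is a field, the right-hand side collapses to $(H^0\R\Gamma(X,F))^*=\hom(\O_X,F)^*$, which already settles the case $A=\O_X$, namely $R_X(\O_X)\cong\omega_X^\bullet$. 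For general $A\in\perf X$ I would bootstrap using dualizability of perfect complexes: with $A^\vee=\R\mathcal{H}om(A,\O_X)$ again perfect and $A^{\vee\vee}\cong A$, one has $A\tens{\L}\omega_X^\bullet\cong\R\mathcal{H}om(A^\vee,\omega_X^\bullet)$, so tensor-Hom adjunction followed by the $\O_X$-case applied to $F=B\tens{\L} A^\vee$ yields
\[
\hom(B, A\tens{\L}\omega_X^\bullet)\cong\hom(B\tens{\L} A^\vee,\omega_X^\bullet)\cong\hom(\O_X, B\tens{\L} A^\vee)^*\cong\hom(A,B)^*.
\]
Each isomorphism is natural in both $A$ and $B$, so uniqueness forces $R_X\cong(-)\tens{\L}\omega_X^\bullet=(-)\tens{\L} f^!\O_{\spec k}$.

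For the final image claim I would use that $\omega_X^\bullet=f^!\O_{\spec k}$ is a dualizing complex for the finite-type $k$-scheme $X$ and hence has bounded, coherent cohomology, so $\omega_X^\bullet\in\D^b(X)$. As every $A\in\perf X$ is locally a bounded complex of vector bundles, the derived tensor product $A\tens{\L}\omega_X^\bullet$ preserves boundedness and coherence of cohomology, giving $R_X(\perf X)\subset\D^b(X)$. The main obstacle is the bookkeeping in the middle step: one must take care that Grothendieck duality is invoked in the correct generality (unbounded $\D(\qcoh X)$ and proper pushforward to a point) and, more importantly, that the displayed chain of isomorphisms is genuinely natural in both variables, since the defining property of a Rouquier functor is a natural rather than merely pointwise isomorphism.
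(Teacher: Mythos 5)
Your proof is correct, but note that the paper never actually proves this lemma: it is quoted directly from Proposition 7.47 and Remark 7.48 of \cite{Rouquier-Dimensions_of_triangulated_categories} and Example 5.12 of \cite{Ballard-Derived_categories_of_singular_schemes_and_reconstruction}, so your argument is a self-contained replacement for that citation rather than a variant of anything written in the text. What you do is essentially the standard argument underlying those references, and it fits the machinery the paper assembles just before the lemma: existence comes from compact generation of $\D(\qcoh X)$ plus Brown representability (the quoted Corollary 4.23 of Rouquier), and the identification comes from verifying the defining isomorphism for the candidate $(-)\tens{\L} f^!\O_{\spec k}$ via Neeman's unbounded Grothendieck duality, its collapse over the field $k$ to $\hom(\O_X,-)^*$, and dualizability of perfect complexes; your displayed chain of isomorphisms is valid and each step is a standard natural isomorphism, so the naturality concern you flag is real but routine. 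Three minor points of bookkeeping. (i) The uniqueness clause you attribute to Corollary 4.23 is recorded in the paper as Ballard's Lemma 5.7; better yet, once you check that $(-)\tens{\L} f^!\O_{\spec k}$ satisfies the defining natural isomorphisms it \emph{is} a Rouquier functor, so your separate Brown-representability existence step is logically redundant (harmless, but worth noticing). (ii) You use $f^!$ to mean the right adjoint of $\R f_*$ on unbounded derived categories of quasi-coherent sheaves; the paper's subsequent computations use the dualizing complex obtained from the closed immersion $X \hookrightarrow \P^n$, and you should say explicitly that these agree because $f$ is proper, since the way the lemma is later applied depends on that identification. (iii) For the image claim your argument is fine, and under the paper's conventions it is even cleaner: every perfect complex on $X$ is globally a bounded complex of vector bundles, so tensoring with the bounded coherent complex $f^!\O_{\spec k}$ visibly lands in $\D^b(X)$.
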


Another extremely useful feature of a Serre functor is that it commutes with all equivalences of triangulated categories. In our situation, there is an analogous statement.

\begin{lemma}[\cite{Ballard-Derived_categories_of_singular_schemes_and_reconstruction}, Lemma 5.15]\label{lemma_Rouquier commutes with equivalences}
  Assume that there are equivalences of triangulated categories $\phi: \T \to \T'$ and $\psi:\mathcal{S} \to \mathcal{S}'$ and functors $F:\mathcal{S} \to \T$ and $F':\mathcal{S}' \to \T'$ making the diagram
  \begin{center}
    \begin{tikzcd}
      \mathcal{S} \arrow[r,"\psi"] \arrow[d,"F"] & \mathcal{S}' \arrow[d,"F'"] \\
      \T \arrow[r,"\phi"] & \T'
    \end{tikzcd}
  \end{center}
  commute. Then if $R_F$ exists, so does $R_{F'}$, and moreover $R_{F'} \circ \psi \cong \phi \circ R_F$. 
\end{lemma}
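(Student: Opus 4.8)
The plan is to construct $R_{F'}$ explicitly as the conjugate $\phi \circ R_F \circ \psi^{-1}$, where $\psi^{-1}$ denotes a quasi-inverse of the equivalence $\psi$. This composite is automatically $k$-linear and triangulated, being a composition of $k$-linear triangulated functors, so it is a legitimate candidate. I would then verify directly that it carries the defining property of a Rouquier functor associated to $F'$. Since the earlier lemma guarantees that a Rouquier functor, when it exists, is unique up to natural isomorphism, exhibiting one such candidate suffices; the asserted relation $R_{F'} \circ \psi \cong \phi \circ R_F$ then follows at once from the definition, as $\psi^{-1} \circ \psi \cong \operatorname{id}_{\mathcal{S}}$.

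To verify the universal property, I would fix objects $A' \in \mathcal{S}'$ and $B' \in \T'$ and assemble the required natural isomorphism $\hom_{\T'}(B', R_{F'}(A')) \cong \hom_{\T'}(F'(A'), B')^*$ by chaining four isomorphisms. First, full faithfulness of $\phi$ gives $\hom_{\T'}(B', \phi R_F \psi^{-1}(A')) \cong \hom_{\T}(\phi^{-1}(B'), R_F \psi^{-1}(A'))$. Second, applying the Rouquier isomorphism $\eta_{A,B}$ for $R_F$ with $A = \psi^{-1}(A')$ and $B = \phi^{-1}(B')$ converts this into $\hom_{\T}(F\psi^{-1}(A'), \phi^{-1}(B'))^*$. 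Third, the commutativity hypothesis $\phi \circ F \cong F' \circ \psi$ yields $F\psi^{-1}(A') \cong \phi^{-1} F'(A')$, turning the expression into $\hom_{\T}(\phi^{-1}F'(A'), \phi^{-1}(B'))^*$. Finally, full faithfulness of $\phi$ once more identifies this with $\hom_{\T'}(F'(A'), B')^*$, as desired.

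The routine but genuinely essential part of the argument is checking that each of these four isomorphisms is natural in both $A'$ and $B'$ and that they compose into a single natural isomorphism $\eta'_{A',B'}$. Naturality of the first and fourth steps comes from the functoriality of the isomorphism supplied by the equivalence $\phi$ together with its unit and counit; naturality of the second step is exactly the hypothesis on $R_F$; and the third step is natural because $\phi \circ F \cong F' \circ \psi$ is a natural isomorphism of functors, transported through $\psi^{-1}$. I expect the main obstacle to be bookkeeping the variance carefully: the Rouquier property is contravariant in the representing slot and the dualization introduces a further contravariance, so one must ensure that naturality in $B'$ threads correctly through the dual and through the two applications of full faithfulness of $\phi$. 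Once this is done, uniqueness of the Rouquier functor upgrades the constructed conjugate to \emph{the} Rouquier functor $R_{F'}$, completing the proof.
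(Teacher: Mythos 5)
Your proof is correct: the conjugate $\phi \circ R_F \circ \psi^{-1}$ is indeed a Rouquier functor for $F'$, and your four-step chain of natural isomorphisms, together with the uniqueness statement from the earlier lemma, gives exactly the claim $R_{F'} \circ \psi \cong \phi \circ R_F$. The paper itself offers no proof (it imports the result from Ballard's Lemma 5.15), and your argument is essentially the same conjugation argument given in that reference, so there is nothing to correct or compare further.
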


\section{Reconstruction for projective curves}

In this section, after some more preliminaries, we give the proof of Theorem \ref{Main Theorem}. We first establish a class of objects on projective curves which we call perfect zero-cycles; one may think of these as ``test objects" against which we will find the support of particular complexes. Following this, we exploit the perfect zero-cycles to show that the Fourier-Mukai transform which realizes the equivalence $\perf X \cong \perf Y$ preserves the structure sheaves of closed points. Then the theorem will follow from Lemma \ref{lemma_skyscraper sheaves give iso}. 

\subsection{Preliminaries: Perfect zero-cycles}
In the original proof, due to Bondal and Orlov in \cite{BondalOrlov-Reconstruction_from_the_derived_category}, point objects in $\D^b(X)$ were characterized as the structure sheaves of closed points by using the Serre functor. In the singular case, since structure sheaves of closed point are no longer perfect, we work primarily with $\perf X$ for the reasons mentioned in Remark \ref{remark_perf}.

Let us first recall from \cite[Section 1.3]{Ruiperez-Fourier-Mukai_transforms_for_Gorenstein_schemes} a construction of what we will refer to as \emph{Koszul zero-cycles}. Let $X$ be a projective curve (which by our conventions, is integral). Then for any closed point $x \in X$, the local ring $(\O_{X,x},\mathfrak{m}_x)$ is an integral noetherian ring of dimension one. Choosing any element of the maximal ideal yields a non-zero divisor $f \in \mathfrak{m}_x \subset \O_{X,x}$, and letting $(f)$ be the ideal sheaf determined locally by this nonzero divisor, this determines a closed subscheme $Z_x$, supported only at $x\in X$. Since $f$ is a nonzero divisor, multiplication by a local section yields a resolution of the structure sheaf $\O_{Z_x} = \O_X/(f)$ by locally free sheaves of length 1 ($=\dim X$), hence $\O_{Z_x}$ is perfect as an object of $\D^b(X)$. It is worthwhile to point out that this is just the Koszul complex associated to a regular sequence of length one, which provides some justification for the name.

\begin{remark}\label{remark_reduced dim one}
  Much more generally, a sequence of elements $(x_1,...,x_n) \subset \mathfrak{m}$ in a noetherian local ring $R$ is called a regular sequence if $x_i$ is a nonzero divisor on $R/(x_1,...,x_{i-1})$ for all $i \leq n$ and $R/(x_1,...,x_n) \neq 0$. The length of any maximal regular sequence is constant and is called the depth of $R$, denoted by $\operatorname{depth}R$. The depth should be thought of as an algebraic definition of the notion of dimension; in general the depth and dimension do not agree, but they can be related via $\operatorname{depth} R \leq \dim R$. Further, if $I$ is an ideal such that $R/I$ has finite projective dimension as an $R$-module, there is a close relationship between the depth of a ring and the projective dimension of $R/I$ (more generally, any $R$-module of finite projective dimension), $$\operatorname{proj.dim.} R/I + \operatorname{depth} R/I = \operatorname{depth} R,$$ known as the Auslander-Buchsbaum formula \cite[Therem 1.3.3]{BrunsHerzog-CM_rings}.  Finally, a ring is said to be a Cohen-Macaulay ring if $\operatorname{depth} R = \dim R$. 
  
  We can also globalize this, namely a locally noetherian scheme $X$ is said to be a Cohen-Macaulay scheme if $\O_{X,x}$ is a Cohen-Macaulay ring for all $x\in X$. While we will only need the most basic facts about Cohen-Macaulay rings, a more thorough treatment can be found in \cite{BrunsHerzog-CM_rings}.
  
  Our primary example is when $R$ is a reduced local ring of dimension one, where such rings are automatically Cohen-Macaulay (this is \cite[Exercise 2.1.20(a)]{BrunsHerzog-CM_rings}). Since the definition of a Cohen-Macaulay scheme is local, it follows that any integral curve is Cohen-Macaulay. A maximal regular sequence in such a ring $R$ is simply a nonzero divisor $f \in R$ as we found above, and it follows that the quotient satisfies $\dim R/(f) = 0$. As indicated, we also have a finite resolution by free $R$-modules of finite rank given by the Koszul resolution: $$0 \to R \overset{\cdot f}{\to} R \to R/I \to 0.$$ 
\end{remark}

\begin{remark}\label{remark_koszul zero cycle}
  Considering again the geometric situation where $X$ is a projective curve, for any $P\in \perf X$, $\operatorname{End}_{\perf X} (P)$ is a finite dimensional (noncommutative) $k$-algebra with multiplication given by composition. Now for any other object $Q \in \perf X$, the space $\hom_{\perf X}(P,Q)$ has the structure of a right $\operatorname{End}_{\perf X}(P)$-module, with the action given by pre-composition. Choosing some closed point $x\in X$, if we consider a Koszul zero-cycle $P = \O_{Z_x}$, since $Z_x$ is affine, we can identify $\O_{Z_x}$ with its global sections and thus $$\hom_{\O_X}(\O_{Z_x},\O_{Z_x}) \cong \hom_{\perf X}(\O_{Z_x},\O_{Z_x}) \cong \O_{Z_x},$$ which shows that in this case the endomorphism algebra is commutative. Further, since $\ext$ is local, the Koszul resolution yields $$\hom_{\perf X}(\O_{Z_x},\O_{Z_x}[1]) \cong \O_{Z_x}.$$ Thus the right $\O_{Z_x}$-module $\hom_{\perf X} (\O_{Z_x},\O_{Z_x}[1])$ is a cyclic module. 
\end{remark}

These Koszul zero-cycles will be the prototypical example of our point-like objects. More generally however, there is, to the author's knowledge, no good categorical way to test if an ideal is generated by a regular sequence. So we give below the definition of a perfect zero-cycle, whose definition was inspired by \cite[Definition 1.6]{SalasSalas-Reconstruction_from_the_derived_category} in the context of Gorenstein schemes.

\begin{definition}
  Let $X$ be projective curve and let $x \in X$ be a closed point. A perfect zero-cycle at $x$ is a closed subscheme $Z_x \hookrightarrow X$ supported at $x$ such that
  \begin{enumerate}
    \item $Z_x$ is a zero dimensional scheme,
    \item $\O_{Z_x}$ is an object of $\perf X$, and
    \item $\hom_{\perf X} (\O_{Z_x},\O_{Z_x}[1])$ is a cyclic right $\O_{Z_x}$-module.
  \end{enumerate} 
\end{definition}

Here and from now on, we will always denote a perfect zero-cycle supported at $x\in X$ by $Z_x$.

\begin{remark}\label{remark_enough zero cycles}
  By Remark \ref{remark_koszul zero cycle}, it is clear that every Koszul zero-cycle is a perfect zero-cycle, and since we can construct a Koszul zero-cycle for every closed point $x \in X$, this shows that the category $\perf X$ has ``enough perfect zero-cycles''. 
\end{remark} 

\begin{remark}\label{remark_regular iff perfect}
  Clearly any regular closed point $x\in X$ with its structure sheaf $k(x)$ is a Koszul zero-cycle. Conversely, if a closed point $x\in X$ with structure sheaf $k(x)$ is a perfect zero-cycle, it follows by assumption that $k(x)$ is perfect as an object of $\D^b(X)$. The well-known result of Auslander, Buchsbaum, and Serre (see for example, \cite[Theorem 2.2.7]{BrunsHerzog-CM_rings}), which states that a noetherian local ring $R$ is regular if and only if its residue field has finite projective dimension as an $R$-module. Thus $x\in X$ must be regular.
\end{remark}

One of the most important uses of the set of perfect zero-cycles is their ability to probe the support of a complex. Recall that the support of a complex $Q \in \D^b(X)$, denoted $\operatorname{supp} Q$, is the union of the supports of its cohomology sheaves. Since there are finitely many non-zero cohomology sheaves, and each cohomology sheaf is coherent, this gives a closed subscheme of $X$. A priori this subscheme could be non-reduced, but we will always implicitly take the reduced subscheme structure when we refer to the support of a complex. 

For the reader's convenience, we include the following elementary lemma.

\begin{lemma}\label{lemma_local maps}
  Let $(R,\mathfrak{m},k)$ be a local noetherian ring and $M$ a finitely generated module. If $\mathfrak{m} \in \operatorname{supp} M$, $M$ admits a surjection $M \to k$. Further if $\{\mathfrak{m}\} = \operatorname{supp}M$, then $M$ also admits an injection $k \to M$.
\end{lemma}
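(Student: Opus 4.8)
The plan is to treat the two assertions separately, with the surjection coming directly from Nakayama's lemma and the injection following from the theory of associated primes.

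For the surjection $M \to k$, the first observation is that since $R$ is local with maximal ideal $\mathfrak{m}$, localization at $\mathfrak{m}$ is the identity, so $M_{\mathfrak{m}} = M$ and the hypothesis $\mathfrak{m} \in \operatorname{supp} M$ is simply equivalent to $M \neq 0$. As $M$ is finitely generated, Nakayama's lemma then guarantees that the $k$-vector space $M/\mathfrak{m}M$ is nonzero. I would choose any $k$-linear surjection $M/\mathfrak{m}M \twoheadrightarrow k$ (for instance, projection onto a single basis vector) and precompose with the canonical quotient $M \twoheadrightarrow M/\mathfrak{m}M$; the composite is the desired surjection $M \to k$.

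For the injection $k \to M$, I would argue via associated primes. Since $M \neq 0$ is finitely generated over the noetherian ring $R$, the set $\operatorname{Ass}_R M$ is nonempty and is contained in $\operatorname{supp} M$. The hypothesis $\operatorname{supp} M = \{\mathfrak{m}\}$ therefore forces $\operatorname{Ass}_R M = \{\mathfrak{m}\}$, and by the very definition of an associated prime there is an element $m \in M$ whose annihilator is exactly $\mathfrak{m}$. The cyclic submodule $Rm \subseteq M$ is then isomorphic to $R/\mathfrak{m} = k$, which is precisely an injection $k \hookrightarrow M$. Equivalently, since $\operatorname{supp} M = \{\mathfrak{m}\}$ implies $M$ has finite length, a minimal $n$ with $\mathfrak{m}^n M = 0$ produces a nonzero element of $\mathfrak{m}^{n-1}M$ annihilated by $\mathfrak{m}$, giving the same conclusion via the socle.

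Neither step presents a genuine obstacle; the only points requiring care are the translations of the support hypotheses into module-theoretic statements, namely that $\mathfrak{m} \in \operatorname{supp} M$ means $M \neq 0$, and that $\operatorname{supp} M = \{\mathfrak{m}\}$ identifies $\mathfrak{m}$ as the unique, hence associated, prime in the support. Both rely only on the standard fact that $\operatorname{supp} M = V(\operatorname{Ann} M)$ for finitely generated $M$ over a noetherian ring, together with the nonemptiness of $\operatorname{Ass}_R M$ for $M \neq 0$.
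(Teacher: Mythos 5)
Your proof is correct and takes essentially the same route as the paper: Nakayama's lemma applied to $M/\mathfrak{m}M$ to produce the surjection, and the nonemptiness of $\operatorname{Ass}_R M$ together with $\operatorname{Ass}_R M \subseteq \operatorname{supp} M$ to identify $\mathfrak{m}$ as an associated prime and obtain the injection $k \hookrightarrow M$. The alternative socle argument you sketch at the end is a harmless bonus not present in the paper, but the core argument is identical.
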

\begin{proof}
  If $M$ is finitely generated and supported at the maximal ideal, then Nakayama's lemma implies that $M/\mathfrak{m}M \neq 0$, and further is a finite dimensional $k$-vector space. Choosing any further projection to a one-dimensional subspace, we have a surjection $M \to k$. 

  Consider a prime ideal $\mathfrak{p} \subset R$. Recall that $\mathfrak{p}$ is said to be an associated to $M$ if and only if $R/\mathfrak{p}$ is isomorphic to a submodule of $M$. Further, since $R$ is noetherian, standard results in commutative algebra imply that the set of associated primes is always nonempty and is a subset of the support of $M$. Now if $\{\mathfrak{m}\} = \operatorname{supp}M$, it follows that $\mathfrak{m}$ is an associated prime to $M$, and hence there is a submodule of $M$ which is isomorphic to $k$. The inclusion of submodules $k \to M$ yields the claim.
\end{proof}

The reader should compare the following result to \cite[Proposition A.91]{Bartocci-FM_in_geometry_and_physics} and \cite[Lemma 3.5]{Ruiperez-Relative_integral_functor_for_singular_fibrations_and_partners}.

\begin{lemma}\label{lemma_perfect point test support}
  Let $X$ be a Cohen-Macaulay variety and $Q \in \D^b(X)$. Given a closed point $x\in X$, $x \in \operatorname{supp}Q$ if and only if $\hom_{\D^b(X)}(Q,\O_{Z_x}[m]) \neq 0$ for some $m \in \Z$ and all perfect zero-cycles $Z_x$ supported at $x$.
\end{lemma}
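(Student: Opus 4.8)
The plan is to reduce the whole statement to a local computation over the local ring $R := \O_{X,x}$ and then detect the top cohomology of the localization $Q_x$. The key structural fact is that $\O_{Z_x}$ is supported at the single closed point $x$, so Hom into it is computed entirely at $x$.

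\emph{Reduction to a local Ext.} Because $\O_{Z_x}$ is supported at $x$, each coherent sheaf $\mathcal{E}xt^q_{\O_X}(Q,\O_{Z_x})$ is supported at $x$ and hence has no higher cohomology. The local-to-global Ext spectral sequence therefore degenerates and, using that sheaf-Ext localizes, I would identify
$$\hom_{\D^b(X)}(Q,\O_{Z_x}[m]) \cong \operatorname{Ext}^m_{\O_X}(Q,\O_{Z_x}) \cong \operatorname{Ext}^m_R(Q_x,(\O_{Z_x})_x),$$
where $Q_x$ is the stalk of $Q$ and the right-hand side is the hyper-Ext of $R$-complexes. Writing $(\O_{Z_x})_x = R/I$, condition (1) in the definition of a perfect zero-cycle says this is a finite-length $R$-module with $\operatorname{supp}_R(R/I)=\{\mathfrak{m}\}$. (Perfect zero-cycles exist at every $x$ by Remark \ref{remark_enough zero cycles}, but only finite length and support at $x$ enter the estimate.)

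\emph{Reverse implication.} I would argue by contrapositive. If $x \notin \operatorname{supp}Q$, then $H^i(Q_x) = H^i(Q)_x = 0$ for all $i$, so $Q_x \cong 0$ in $\D^b(R)$; by the identification above, $\hom_{\D^b(X)}(Q,\O_{Z_x}[m]) = 0$ for every $m$ and every $Z_x$. Thus nonvanishing of this group for any single perfect zero-cycle and any shift forces $x \in \operatorname{supp}Q$.

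\emph{Forward implication.} Suppose $x \in \operatorname{supp}Q$, so $Q_x \neq 0$; let $b$ be the largest integer with $H := H^b(Q_x) \neq 0$. The truncation triangle $\tau_{\leq b-1}Q_x \to Q_x \to H[-b] \to$ yields, after applying the contravariant $\R\hom_R(-,R/I)$ and passing to cohomology in degree $-b$, an isomorphism
$$\operatorname{Ext}^{-b}_R(Q_x,R/I) \cong \operatorname{Hom}_R(H,R/I),$$
since $\tau_{\leq b-1}Q_x$ has cohomology in degrees $\leq b-1$ and so contributes nothing to hyper-Ext in degrees $\leq -b$, killing both flanking terms in the long exact sequence. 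It remains to show $\operatorname{Hom}_R(H,R/I) \neq 0$: as $H$ is a nonzero finitely generated module, Lemma \ref{lemma_local maps} gives a surjection $H \twoheadrightarrow k$, and since $\operatorname{supp}_R(R/I)=\{\mathfrak{m}\}$ the same lemma gives an injection $k \hookrightarrow R/I$; their composite is a nonzero homomorphism. Crucially $b$ depends only on $Q$ and $x$, so the single shift $m=-b$ works simultaneously for \emph{every} perfect zero-cycle $Z_x$, which is exactly the asserted form ``for some $m$ and all $Z_x$''.

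The genuinely delicate point is this forward direction: one must exhibit a nonzero Hom into $\O_{Z_x}$ for \emph{all} zero-cycles at once, and with a \emph{uniform} shift. Isolating the extreme cohomological degree $-b$ via the truncation triangle strips the computation down to $\operatorname{Hom}_R(H,R/I)$, and Lemma \ref{lemma_local maps} — specifically the inclusion $k \hookrightarrow R/I$, which rests on $R/I$ having finite length (condition (1)) — guarantees this group is nonzero no matter which zero-cycle is chosen.
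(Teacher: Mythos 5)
Your proof is correct, and it rests on the same core mechanism as the paper's proof --- the nonzero composite $H \twoheadrightarrow k(x) \hookrightarrow \O_{Z_x}$ out of the top cohomology at $x$, supplied by Lemma \ref{lemma_local maps} --- but the machinery surrounding that mechanism is genuinely different. The paper stays global: it runs the hyperext spectral sequence $E^{p,q}_2 = \hom_{\D^b(X)}(\mathcal{H}^{-q},\O_{Z_x}[p]) \Rightarrow \hom_{\D^b(X)}(Q,\O_{Z_x}[p+q])$, observes that the corner term $E^{0,-q_0}_2$ is nonzero (where $q_0$ is maximal with $\mathcal{H}^{q_0}_x \neq 0$), and then argues that this term survives to $E_\infty$: incoming differentials come from negative Ext groups between sheaves, and outgoing differentials land in Ext groups between sheaves with disjoint supports. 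You instead localize first, identifying $\hom_{\D^b(X)}(Q,\O_{Z_x}[m])$ with $\operatorname{Ext}^m_R(Q_x,R/I)$ over $R = \O_{X,x}$, and then replace the spectral sequence by a truncation triangle, which yields the sharper statement $\operatorname{Ext}^{-b}_R(Q_x,R/I) \cong \operatorname{Hom}_R(H^b(Q_x),R/I)$. This buys two simplifications: cohomology sheaves of $Q$ supported away from $x$ die at the localization step, so the paper's disjoint-support argument for killing outgoing differentials becomes unnecessary; and you get an isomorphism rather than mere survival of a nonzero subquotient, which makes the uniformity of the shift $m=-b$ in the zero-cycle completely transparent (the paper's proof also yields a uniform shift, namely $m=-q_0$, which coincides with yours). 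The price is the reduction step itself --- degeneration of local-to-global Ext and localization of sheaf Ext for the complex $Q$ --- which is standard but should be justified by noting that $Q$, living on a projective variety, admits a bounded-above resolution by locally free sheaves. One minor elision to patch: Lemma \ref{lemma_local maps} requires $\mathfrak{m} \in \operatorname{supp} H$ before it gives the surjection $H \twoheadrightarrow k$, so you should remark that any nonzero finitely generated module over a local ring automatically has the maximal ideal in its support (its annihilator is contained in $\mathfrak{m}$); with that observation your argument is complete.
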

We note here that we only need the property that a the structure sheaf of a perfect zero-cycle is a supported at only a single closed point. 
\begin{proof}
  Since $X$ is assumed Cohen-Macaulay, for any closed point there is a perfect zero-cycle supported on it (see Remark \ref{remark_enough zero cycles}). Suppose a closed point $x\in X$ belongs to the support of $Q$ and denote the cohomology sheaves of $Q$ by $\mathcal{H}^i$. We consider the spectral sequence
  \begin{equation}\label{eqn_hyperext1}
    E^{p,q}_2 = \hom_{\D^b(X)}(\mathcal{H}^{-q},\O_{Z_{x}}[p]) \implies \hom_{\D^b(X)} (Q , \O_{Z_x}[p+q]).
  \end{equation}
  Since $x$ belongs to the support of $Q$, there is at least one cohomology sheaf $\mathcal{H}^{q}$ supported at $x$. Let $\iota_x:\{x\} \hookrightarrow X$ be the inclusion of the closed point $x$, and using the adjunction between the inverse image and pushforward, there is an isomorphism: $$\hom_{\O_{X,x}}(\mathcal{H}^{q}_{x},\O_{Z_{x}}) \cong \hom_{\O_X}(\mathcal{H}^{q},\O_{Z_{x}}),$$ where we identify $\O_{Z_x}$ with its pushforward since it is a skyscraper sheaf on $X$.
  
  To show that these spaces are nonzero it is clearly enough to see that the left-hand side is nonzero.  In particular the stalk $\mathcal{H}^q_x$ is a finitely generated $\O_{X,x}$-module which is supported on the maximal ideal $\mathfrak{m}_x$ by assumption, and $\O_{Z_x}$ is a finitely generated $\O_{X,x}$ module whose support is precisely the maximal ideal. Hence by Lemma \ref{lemma_local maps} there is a nonzero morphism $$\mathcal{H}^q_x \to k(x) \to \O_{Z_x},$$ which shows that the morphism space is nonzero. 
  
  So now taking the maximal $q_0$ such that $\mathcal{H}^{q_0}_x \neq 0$ at $x$, we see that the term $E^{0,-q_0}_2$ survives to the $E_\infty$ page as there are no negative Ext groups between two sheaves, so the differential ending at $E^{0,-q_0}_2$ starts from zero, and the differential leaving $E^{0,-q_0}_2$ hits terms which are zero simply because we had chosen $q_0$ maximal, and all Ext groups between sheaves with disjoint support are trivial  (using the local-to-global spectral sequence). Hence this term survives, and since it is nonzero, we see $\hom_{\D^b(X)}(Q,\O_{Z_x}[-q_0]) \neq 0$.

  Conversely, suppose that $\hom_{\D^b(X)}(Q,\O_{Z_x}[m]) \neq 0$ for some $m \in \Z$. Suppose by way of contradiction that $x \notin \operatorname{supp} Q$, equivalently, $x \notin \operatorname{supp}\mathcal{H}^i$ for all $i \in \Z$. But since sheaves with disjoint support have no nontrivial Ext groups, the spectral sequence \ref{eqn_hyperext1} yields a contradiction.
\end{proof}

\subsection{Preliminaries: The Rouquier functor}
Before we move on to the proof, there is one more piece of the puzzle which we should elaborate on. In Lemma \ref{lemma_Rouquier functor is dualizing complex} we recalled from \cite[Remark 7.48]{Rouquier-Dimensions_of_triangulated_categories} that the Rouquier functor $R_X$ associated to any projective scheme is given by $$R_X(-) = (-) \tens{\L} f^! \O_{\spec k},$$ and has essential image in $\D^b(X)$. We would now like to take the opportunity to remind the reader of the relevant features of the dualizing complex $f^! \O_{\spec k}$ in the setting where $X$ is a projective curve. By assumption we have a closed immersion $i:X \hookrightarrow \P^n$, and so the dualizing complex may be taken to be \cite[\href{https://stacks.math.columbia.edu/tag/0AA2}{Lemma 0AA2}, \href{https://stacks.math.columbia.edu/tag/0AA3}{Lemma 0AA3}]{stacks-project} $$f^! \O_{\spec k}  = \R\shom(i_*\O_X, \omega_{\P^n}[n])|_X.$$

Since $X$ is Cohen-Macaulay by Remark \ref{remark_reduced dim one}, this complex has exactly one non-zero cohomology sheaf in degree $-1$ \cite[\href{https://stacks.math.columbia.edu/tag/0BS2}{Lemma 0BS2}]{stacks-project}, and so the complex is quasi-isomorphic to its unique cohomology sheaf $$f^! \O_{\spec k} \cong \mathcal{H}^{-1} (f^! \O_{\spec k})[1].$$ We denote this cohomology sheaf by $\omega_X$. This is nothing more then the classical dualizing sheaf appearing in Serre duality  \cite[\href{https://stacks.math.columbia.edu/tag/0BS3}{Lemma 0BS3}]{stacks-project}. 
Putting this all together, if $X$ is a projective curve, the Rouquier functor can be given explicitly as $$R_X(-) = (-) \tens{\L} \omega_X[1].$$

It is well-known that $\omega_X$ is a divisorial sheaf (reflexive and generically locally free of rank 1) and in particular torsion-free. Indeed on the regular locus $U \subset X$, this definition reduces to give an isomorphism of coherent sheaves $\omega_X|_U \cong \det \Omega^1_{U/k}$, where the latter is the canonical bundle on $U$ (see for example, \cite[Proposition 22]{Kleiman-Relative_Duality} or \cite[Lemma 3.7.5]{Kovacs-Rational_singularities}). Further, the restriction of $\omega_X$ to the Gorenstein locus is a line bundle \cite[\href{https://stacks.math.columbia.edu/tag/0BFQ}{Lemma 0BFQ}]{stacks-project}. 

With this description of the dualizing sheaf $\omega_X$, let us introduce some convenient notation. Given a projective curve $X$, we define the \emph{non-Gorenstein locus} $X_{nG}$ to be the set consisting of all points $x \in X$ such that $\omega_{X,x}$ is not cyclic as a $\O_{X,x}$-module. By definition on $X \setminus X_{nG}$ the stalk $\omega_{X,x}$ of the canonical sheaf is generated by a single element and since it is torsion-free, it must be free of rank 1, hence $\omega_X|_U \cong \O_U$ for some open neighborhood containing $x$. Using this notation, we may reformulate our assumptions in Theorem \ref{Main Theorem} and say that a projective curve $X$ is strictly Cohen-Macaulay if and only if $X_{nG} \neq \emptyset$.

\subsection{The Proof of Theorem \ref{Main Theorem}}

The first step is to characterize the perfect zero-cycles as objects of $\perf X$. Recall from our conventions that all varieties are assumed integral. As with the definition of a perfect zero-cycle, the following is a direct generalization of \cite{SalasSalas-Reconstruction_from_the_derived_category}.

\begin{definition}\label{def-point}
  Let $P$ be an object of $\perf X$, $\iota,R_X: \perf X \to \D^b(X)$ the inclusion and Rouquier functor respectively. Then we say that $P$ is a strong point object of codimension $d\geq 0$ if it satisfies 
  \begin{enumerate}
    \item $R_X(P) \cong \iota(P)[d]$ 
    \item $\hom(P,P[j]) = 0$ for $j <0$,
    \item $\hom(P,P)$ is a commutative local $k$-algebra,
    \item $\hom(P,P[d])$, is a cyclic right $\hom(P,P)$-module.
  \end{enumerate}
  If $P$ only satisfies (2) through (4), we say that it is a weak point object (where the codimension is taken by default to be $\dim X=1$). 
\end{definition}

Its clear from the definition that the structure sheaf of a perfect zero-cycle supported in $X \setminus X_{nG}$ is a strong point object. However the perfect zero-cycles which are supported in $X_{nG}$ are weak point objects which are not strong;  see Lemma \ref{lemma_point support and codimension} below. In addition, we wish to point out that there may be weak point objects which are not structure sheaves of perfect zero-cycles, but see Lemma \ref{lemma_partial point} for a partial answer to this. 

\begin{lemma}\label{lemma_point support and codimension}
  Let $P$ be a strong point object of codimension $d$ on a projective curve $X$. Then the following hold:
  \begin{enumerate}
    \item $d = 1$, and
    \item $\operatorname{supp} P \cap X_{nG} = \emptyset$.
  \end{enumerate}
\end{lemma}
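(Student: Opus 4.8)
The plan is to translate everything through the explicit description of the Rouquier functor on a projective curve, namely $R_X(-) = (-)\tens{\L}\omega_X[1]$ (from Lemma~\ref{lemma_Rouquier functor is dualizing complex} and the discussion following it). Condition (1) of Definition~\ref{def-point} then reads $P\tens{\L}\omega_X[1]\cong P[d]$, i.e.
\[
P\tens{\L}\omega_X\cong P[d-1]
\]
in $\D^b(X)$. Note $P\neq 0$, since $\hom(P,P)$ is a nonzero local $k$-algebra. Both claims will be extracted from this single relation, and part (2) will additionally use $d=1$ obtained in part (1).

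For part (1), I would compare the top nonvanishing cohomology sheaves of the two sides. Write $b=\max\{i:\mathcal{H}^i(P)\neq 0\}$. Since $\omega_X$ is a sheaf concentrated in degree $0$, the hyper-Tor spectral sequence $\mathcal{T}\!or_{-q}(\mathcal{H}^p(P),\omega_X)\Rightarrow \mathcal{H}^{p+q}(P\tens{\L}\omega_X)$ has all contributions in total degree $\leq b$, and the corner term gives $\mathcal{H}^b(P\tens{\L}\omega_X)\cong \mathcal{H}^b(P)\tens{}\omega_X$. At any point $x$ in the (nonempty) support of $\mathcal{H}^b(P)$, both $\mathcal{H}^b(P)_x$ and $\omega_{X,x}$ are nonzero finitely generated $\O_{X,x}$-modules, so their tensor product is nonzero by Nakayama; hence $\mathcal{H}^b(P)\tens{}\omega_X\neq 0$ and the top cohomological degree of $P\tens{\L}\omega_X$ is exactly $b$. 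The top degree of the right-hand side $P[d-1]$ is $b-(d-1)$, so $d-1=0$ and $d=1$.

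With $d=1$ the relation becomes $P\tens{\L}\omega_X\cong P$. For part (2), suppose toward a contradiction that $x\in\operatorname{supp}P\cap X_{nG}$, and localize: setting $R=\O_{X,x}$, $\omega=\omega_{X,x}$, and $k=R/\mathfrak{m}_x$, we obtain a nonzero perfect complex $P_x$ over the one-dimensional Cohen-Macaulay local ring $R$ with $P_x\tens{\L}_R\omega\cong P_x$. Since $x\in X_{nG}$ the canonical module $\omega$ is non-free, so $R$ is not Gorenstein; as $\omega$ is maximal Cohen-Macaulay, the Auslander-Buchsbaum formula forces $\operatorname{proj.dim.}\omega=\infty$, whence $\operatorname{Ext}^j_R(\omega,k)\neq 0$ for every $j\geq 0$. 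Applying $\R\hom_R(-,k)$ together with the derived tensor-hom adjunction gives
\[
\R\hom_R(P_x,k)\cong \R\hom_R(P_x\tens{\L}_R\omega,k)\cong \R\hom_R\bigl(\omega,\R\hom_R(P_x,k)\bigr).
\]
Because $P_x$ is perfect, the left-hand side is a bounded, nonzero complex; representing $P_x$ by a minimal free complex shows $\R\hom_R(P_x,k)$ is a nonzero finite direct sum of shifts of $k$. Substituting this into the right-hand side yields a sum of shifts of $\R\hom_R(\omega,k)$ with at least one positive multiplicity, which by $\operatorname{Ext}^j_R(\omega,k)\neq 0$ for all $j$ has cohomology in arbitrarily high degrees, and no cancellation is possible since every contribution is a nonnegative multiple of a copy of $k$. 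This contradicts boundedness, so no such $x$ exists and $\operatorname{supp}P\cap X_{nG}=\emptyset$.

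The main obstacle is part (2): the crux is recognizing that the relation $P\tens{\L}\omega_X\cong P$ can preserve perfectness only if the infinite homological complexity of $\omega_X$ at a non-Gorenstein point is never seen, which is exactly what the non-vanishing of $\operatorname{Ext}^{\gg 0}_R(\omega,k)$ forbids. The supporting facts — infinite projective dimension of the canonical module at non-Gorenstein points (via Auslander-Buchsbaum) and the formality of $\R\hom_R(P_x,k)$ coming from minimality of the free resolution — are routine, but care is needed to confirm that the unboundedness genuinely survives, i.e.\ that the nonnegative graded contributions cannot cancel.
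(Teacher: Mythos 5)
Your proposal is correct. Part (1) is essentially the paper's own argument: compare the top nonvanishing cohomology of $P\tens{\L}\omega_X\cong P[d-1]$, using right-exactness of the tensor product and Nakayama to see that tensoring with $\omega_X$ does not lower the top degree; so nothing to add there.

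Part (2) is where you genuinely diverge. The paper's argument is much more elementary: it applies the derived fiber $\L\iota_x^*$ to the relation $P\tens{\L}\omega_X\cong P$ at a point $x\in\operatorname{supp}P$ and compares only the right-most cohomology, obtaining an isomorphism of $k(x)$-vector spaces $\mathcal{H}^{j_x}_x\otimes_{\O_{X,x}}\omega_{X,x}\otimes_{\O_{X,x}}k(x)\cong\mathcal{H}^{j_x}_x\otimes_{\O_{X,x}}k(x)$; if $x\in X_{nG}$ then $\omega_{X,x}$ needs at least two generators, so by Nakayama the left-hand side has strictly larger dimension, a contradiction. That argument uses the non-cyclicity in the definition of $X_{nG}$ directly, requires no Auslander--Buchsbaum formula, no maximal Cohen-Macaulayness of $\omega_{X,x}$, and never actually uses that $P$ is perfect. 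Your argument instead localizes and plays the infinite homological complexity of $\omega_{X,x}$ against the perfectness of $P_x$: non-free plus maximal Cohen-Macaulay gives $\operatorname{proj.dim.}\omega_{X,x}=\infty$ by Auslander--Buchsbaum, hence $\operatorname{Ext}^j_R(\omega_{X,x},k)\neq 0$ for all $j\geq 0$, and then the tensor-hom adjunction together with the formality of $\R\hom_R(P_x,k)$ (via a minimal free complex) shows that $P_x\tens{\L}\omega_{X,x}\cong P_x$ would force an unbounded complex to be bounded. Your steps all check out, including the non-cancellation point, so this is a valid alternative proof; what it buys is a cleaner conceptual statement, namely that a nonzero perfect complex over a noetherian local ring can never be isomorphic, even up to shift, to its derived tensor product with a finitely generated module of infinite projective dimension, which explains \emph{why} strong point objects cannot see non-Gorenstein points. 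What the paper's route buys is brevity and weaker hypotheses. One half-line you should make explicit: the paper defines $X_{nG}$ by $\omega_{X,x}$ being non-cyclic, while you use non-freeness; the implication (non-cyclic implies non-free) holds because $\omega_X$ is torsion-free of generic rank one, so a free stalk would necessarily be free of rank one and hence cyclic.
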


\begin{remark}
  Before we begin the proof, we remind the reader that a straightforward application of Nakayama's Lemma gives a convenient criterion for when a closed point $\iota_x:\{x\} \hookrightarrow X$ belongs to the support of a complex $C$, namely this occurs if and only if $\L \iota_x^* C \neq 0$.
\end{remark}

\begin{proof}
  Taking cohomology of both sides of the isomorphism $R_X(P) \cong i(P)[d]$ in Definition \ref{def-point} we see that 
  \begin{equation}\label{eqn_ point isomorphism}
    \mathcal{H}^{i+1} (P \tens{\L} \omega_X) \cong \mathcal{H}^{i+d}(P).
  \end{equation} 
  Now using that both complexes are bounded above, set $j = \max\{ i \,| \, \mathcal{H}^i(P) : = \mathcal{H}^i \neq 0\}$. It follows from right exactness of the tensor product that $$\mathcal{H}^j (P \tens{\L} \omega_X) \cong \mathcal{H}^j \tens{} \omega_X$$ and so $j$ is also maximal with respect to $\mathcal{H}^j(P\tens{\L}\omega_X) \neq 0$, as the right hand side vanishes if and only if $\mathcal{H}^j$ vanishes. Now in (\ref{eqn_ point isomorphism}), it follows that $i+1$ is maximal if and only if $i+d$ is maximal, and further that $i + 1 = i + d$. Hence $d=1$.

  Now fixing some closed point $\iota_x :\{x\}\hookrightarrow X$, passing to stalks via $\iota_x^{-1}$ is an exact functor, and so it commutes with taking cohomology of the complex $P$. Let $\mathcal{H}^i_x = \iota_x^{-1} \mathcal{H}^i = \mathcal{H}^i( \iota_x^{-1} (P))$ be the cohomology sheaves of the complex $P_x = \iota_x^{-1} (P)$. Then let $j_x$ be maximal such that $\mathcal{H}^{j_x}_x \neq 0$. Then again by right-exactness of the tensor product and exactness of the inverse image, taking right-most cohomology (which is in degree $j_x$ by assumption) of $\L \iota_x^* (P \tens{\L} \omega_X) \cong \L \iota_x^* P$ yields an isomorphism of $k(x)$-vector spaces $$\mathcal{H}^{j_x}_x \otimes_{\O_{X,x}} \omega_{X,x} \otimes_{\O_{X,x}} k(x) \cong \mathcal{H}^{j_x}_x \otimes_{\O_{X,x}} k(x).$$
  However if $x \in X_{nG}$ then $\omega_{X,x}$ is generated by at least two elements as a $\O_{X,x}$-module and by Nakayama's lemma the left hand side has strictly larger dimension then the right hand side. This is a contradiction, so we conclude that the cohomology sheaves of $P$ have support disjoint from $X_{nG}$. 
\end{proof}

\begin{lemma}\label{lemma_characterize point objects}
  Let $X$ be a projective curve which is strictly Cohen-Macaulay ($X_{nG} \neq \emptyset$). Then every point object $P$ in $\perf X$ is isomorphic to $\O_{Z_x}[r]$ for some $x \in X \setminus X_{nG}$ and $r \in \Z$, where $Z_x$ is a perfect zero-cycle supported at $x \in X$.
\end{lemma}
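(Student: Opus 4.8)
The plan is to show that any point object $P$ in $\perf X$ is, up to shift, the structure sheaf of a perfect zero-cycle supported at a regular (or at least Gorenstein) point. Since by definition a point object is at least a weak point object, and since Lemma~\ref{lemma_point support and codimension} only applies to strong point objects, I would first address the interplay between the two notions. The key structural fact I would exploit is that the support of $P$ must be a single closed point: conditions (3) and (4) in Definition~\ref{def-point}, together with the fact that $\hom(P,P)$ is a \emph{local} $k$-algebra, should force $\operatorname{supp}P$ to be connected and, being a proper closed subset of the curve $X$, hence zero-dimensional; I would use the perfect zero-cycles as test objects via Lemma~\ref{lemma_perfect point test support} to detect this support precisely. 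The idea is that if $P$ were supported at two distinct points $x\neq y$, then $\hom(P,P)$ would split as a product of the local contributions at $x$ and at $y$, contradicting locality.

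Once the support is known to be a single point $x$, the next step is to pin down where $x$ can lie and to recover the sheaf structure. Here I would invoke Lemma~\ref{lemma_point support and codimension}: I must first verify that a point object is in fact a \emph{strong} point object so that the lemma applies. The natural approach is to observe that condition (1) of Definition~\ref{def-point}, namely $R_X(P)\cong P[d]$ for some $d$, follows automatically once we know $P$ is supported at a single point $x$ with $x\notin X_{nG}$, because there $\omega_X$ is locally free of rank one (indeed locally trivial by the discussion of the non-Gorenstein locus), so that $R_X(P) = P\otimes^{\mathbf L}\omega_X[1] \cong P[1]$; this gives $d=1$ and simultaneously the strong-point condition. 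The subtle point — and what I expect to be the main obstacle — is ruling out the possibility that $\operatorname{supp}P$ meets $X_{nG}$ for a \emph{weak} point object that is not a priori strong. Lemma~\ref{lemma_point support and codimension} disposes of this for strong point objects, but I must show directly that a weak point object cannot be supported at a non-Gorenstein point. I would attack this by a careful local analysis at $x\in X_{nG}$: the cyclicity condition (4) on $\hom(P,P[d])$ must be confronted with the fact that $\omega_{X,x}$ requires at least two generators, so the Rouquier functor (Serre duality) forces $\hom(P,P[d])$ to be noncyclic, contradicting (4). Making this argument precise at the level of the stalk, using local duality to identify $\hom(P,P[d])$ with a dual built from $\omega_{X,x}$, is the technical heart of the proof.

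Having localized $P$ to a single regular-or-Gorenstein point $x$, I would finish by identifying $P$ with $\O_{Z_x}[r]$. After shifting so that the top nonzero cohomology sits in degree zero, condition (2) of Definition~\ref{def-point} ($\hom(P,P[j])=0$ for $j<0$) together with the vanishing of negative self-exts should force $P$ to have a single cohomology sheaf, i.e.\ $P\cong \mathcal F[r]$ for a coherent sheaf $\mathcal F$ supported at $x$. The commutative local ring condition (3), $\hom(P,P)\cong \operatorname{End}_{\O_{X,x}}(\mathcal F_x)$ being a commutative local $k$-algebra, combined with $\mathcal F$ being perfect (hence of finite projective dimension over the Cohen-Macaulay local ring $\O_{X,x}$) and supported only at $x$, should identify $\mathcal F_x$ as a cyclic $\O_{X,x}$-module, i.e.\ $\mathcal F \cong \O_{Z_x}$ for a closed subscheme $Z_x$. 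Here I would use the Auslander-Buchsbaum formula from Remark~\ref{remark_reduced dim one}: finite projective dimension plus the depth computation pins down $\mathcal F_x = \O_{X,x}/I$ with $I$ generated by a regular sequence of length one, which is exactly a Koszul (hence perfect) zero-cycle. Finally, conditions (3) and (4) translate into precisely the cyclicity requirement in the definition of a perfect zero-cycle, completing the identification $P\cong \O_{Z_x}[r]$ with $x\in X\setminus X_{nG}$.

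The step I anticipate as genuinely hard is the second one: cleanly excluding $X_{nG}$ from the support of a \emph{weak} point object. The strong-point case is handled by the clean cohomological degree-counting in Lemma~\ref{lemma_point support and codimension}, but a weak point object carries no a priori control on $R_X(P)$, so I cannot directly run that argument. The resolution should come from recognizing that conditions (2)--(4) already force $P$ to be a shifted sheaf supported at a single point (as argued above), at which point $R_X(P)\cong P\otimes^{\mathbf L}\omega_X[1]$ becomes computable locally and the cyclicity obstruction at a non-Gorenstein point can be made explicit. Thus the logical order matters: I would establish the single-point support and the shifted-sheaf structure \emph{first}, and only then deduce the strong-point property and apply Lemma~\ref{lemma_point support and codimension}, rather than assuming strongness at the outset.
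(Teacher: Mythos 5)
There is a genuine gap, and it sits exactly where you predicted the ``technical heart'' of your argument would be. The statement you plan to prove there --- that a \emph{weak} point object cannot be supported at a point of $X_{nG}$ --- is false, and the paper says so explicitly just after Definition \ref{def-point}: perfect zero-cycles supported in $X_{nG}$ \emph{are} weak point objects (which are not strong). Concretely, by Remark \ref{remark_koszul zero cycle} a Koszul zero-cycle $\O_{Z_x}$ at \emph{any} closed point $x$, including $x \in X_{nG}$, satisfies conditions (2)--(4): its endomorphism algebra is $\O_{Z_x}$, commutative and local, and $\hom(\O_{Z_x},\O_{Z_x}[1]) \cong \O_{Z_x}$ is cyclic. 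Your proposed mechanism for the contradiction cannot work because condition (4) for a weak point object makes no reference to $\omega_X$ at all: Rouquier duality gives $\hom(P,R_X(P)) \cong \hom(P,P)^*$, i.e.\ a statement about $\hom(P, P \tens{\L} \omega_X[1])$, and converting this into a statement about $\hom(P,P[1])$ requires precisely the isomorphism $R_X(P) \cong P[1]$ of condition (1) --- the very thing you are trying to deduce. So the non-cyclicity of $\omega_{X,x}$ never gets confronted with condition (4) for a weak point object, and indeed no contradiction exists. The lemma must therefore be read as a statement about \emph{strong} point objects (this is also forced by how it is used in Lemma \ref{lemma_FM preserves points}), and your proposed logical order --- support first, strongness last --- cannot be repaired.

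There is a second, related gap in your first step: locality of $\hom(P,P)$ only gives indecomposability of $P$, and to conclude zero-dimensional support one must first know that $\operatorname{supp} P$ is a \emph{proper} closed subset of $X$; you assert this without justification, and for a weak point object there is no reason for it (on a curve of arithmetic genus one, $\O_X$ itself satisfies (2)--(4) and is supported everywhere). In the paper's proof, properness is exactly where the hypotheses enter: condition (1), via Lemma \ref{lemma_point support and codimension}, gives $d=1$ and $\operatorname{supp} P \cap X_{nG} = \emptyset$, and since $X$ is strictly Cohen--Macaulay the set $X_{nG}$ is nonempty, so the support misses a nonempty closed set and is therefore finite. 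After that, your outline agrees with the paper: indecomposability forces a single supporting point; condition (2) together with Lemma \ref{lemma_local maps} forces a single cohomology sheaf $\F$; and the Auslander--Buchsbaum formula plus cyclicity of $\hom(P,P[1])$ forces $\F/\mathfrak{m}_x\F$ to be one-dimensional, hence $\F \cong \O_X/\mathcal{I}$ is the structure sheaf of a perfect zero-cycle. Two small corrections to that final step: it is condition (4), not condition (3), that drives the cyclicity argument, and the conclusion is only that $Z_x$ is a \emph{perfect} zero-cycle --- one cannot conclude $\mathcal{I}$ is generated by a regular sequence (i.e.\ that $Z_x$ is Koszul); the paper defines perfect zero-cycles more broadly precisely because no categorical criterion for that is available.
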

The following proof is almost completely identical to the analogous result in \cite{SalasSalas-Reconstruction_from_the_derived_category}. We include it here for the reader's convenience.
\begin{proof}
  Let $P = \O_{Z_x}$ be the structure sheaf of a perfect zero-cycle supported on some closed point $x \in X \setminus X_{nG}$. It follows from the fact that $x \in X \setminus X_{nG}$ that $P$ satisfies (1) in Definition \ref{def-point}. $P$ obviously satisfies (2) in Definition \ref{def-point}, as does any sheaf concentrated in some degree, and since $\O_{Z_x}$ is a cyclic $\O_X$-module, it follows that $$\hom_{\perf X}(P,P) \cong \hom_{\O_{X,x}}(\O_{Z_x},\O_{Z_x}) \cong \O_{Z_x}$$ and this is (3) in Definition \ref{def-point}. The last property is immediate from the definition of a perfect zero-cycle. This shows that perfect zero-cycles supported on $X \setminus X_{nG}$ are strong point objects.

  Conversely, the first step is to show that $P$ is supported at a single closed point. Lemma \ref{lemma_point support and codimension} implies that the support is proper. Since a closed proper subset of an integral curve is a finite set of points, we see $\dim (\operatorname{supp}P)=0$. If $P$ were supported on at least two distinct points, it follows that $P$ can be decomposed into a nontrivial direct sum. Yet the assumption that $\hom(P,P)$ be a local $k$-algebra implies that $P$ is indecomposable. All together this implies that $P$ is supported at a single point. 

  Next, we show that property (2) of Definition \ref{def-point} implies that $P$ has a single cohomology sheaf. Suppose that $P$ has two nontrivial cohomology sheaves, $\mathcal{H}^i$, $\mathcal{H}^j$, $i<j$ minimal and maximal respectively. Since we chose $i$ minimal, respectively $j$ maximal, we have a nonzero morphism $\mathcal{H}^i \to P[i]$, respectively $P[j] \to \mathcal{H}^j$. Now since $\mathcal{H}^j$ and $\mathcal{H}^i$ are supported at a single common point, Lemma \ref{lemma_local maps} implies the existence of a nonzero morphism $\mathcal{H}^j \to \mathcal{H}^i$. Hence the composition $$P[j] \to \mathcal{H}^j \to \mathcal{H}^i \to P[i]$$ is nontrivial, but this gives a nonzero element in $\hom(P[j],P[i]) = \hom(P,P[i-j])$ which is a contradiction as $i-j<0$. Hence $P \cong \F [r] \in \perf X$ for some $r \in \Z$ and $\F \in \coh X$.

  Now we have an exact sequence $$ 0 \to \mathcal{G} \to \F \to \F/ \mathfrak{m}_x \F \to 0$$ in $\coh X$, where $\mathcal{G}$ is the kernel of the projection $\F \to \F/\mathfrak{m}_x \F \cong \F \otimes_{\O_{X,x}} k(x)$. This gives a distinguished triangle in $\D^b(X)$, and applying $\hom(\F,-)$ gives a long exact sequence of abelian groups. Now since $\F$ is a coherent sheaf with zero-dimensional support, it has $\operatorname{depth} \F =0$ as a module over the local ring. The Auslander-Buchsbaum formula implies, since the local ring is Cohen-Macaulay, that the projective dimension of $\F$ is $\dim X=1$.
  
  Thus, in the long exact sequence of abelian groups, we have a surjection $$\cdots \to \hom(\F,\F[1]) \to \hom(\F, \F/ \mathfrak{m}_x \F [1]) \to 0,$$ and since the former is a cyclic $\hom(\F,\F)$-module, the latter must be as well. In particular this implies that $\F/\mathfrak{m}_x \F$ is a one dimensional $k(x)$-vector space, as otherwise, $\hom(\F , \F/\mathfrak{m}_x \F [1])$ would split into a direct sum of Hom spaces, and hence not be cyclic. This then implies that $\F$ is cyclic by Nakayama's lemma, and so $\F \cong \O_X/ \mathcal{I}$ with $\mathcal{I}$ the annihilator of $\F$ in $\O_X$ which is $\mathfrak{m}_x$-primary. Taking the closed subscheme $Z_x$ defined by $\mathcal{I}$, we see that $Z_x$ is supported at a single closed point $x \in X\setminus X_{nG}$, and $P \cong \O_{Z_x}[r]$. Hence the claim. 
\end{proof}

The same method of proof also yields the following.
\begin{lemma}\label{lemma_partial point}
  Let $X$ be projective curve and $P \in \perf X$ be a weak point object with zero-dimensional support. Then $P \cong \O_{Z_x}[r]$ for some $x\in X$ and $r\in \Z$.
\end{lemma}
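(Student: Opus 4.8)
The plan is to imitate the proof of Lemma~\ref{lemma_characterize point objects}, simply dropping the steps that invoked the strong point condition (1) of Definition~\ref{def-point} (and with it Lemma~\ref{lemma_point support and codimension}), since here the zero-dimensionality of $\operatorname{supp} P$ is a hypothesis rather than a consequence. The only geometric input needed is that an integral curve $X$ is Cohen-Macaulay by Remark~\ref{remark_reduced dim one}. Note that, because we no longer appeal to condition (1), there is no restriction forcing the support away from $X_{nG}$, which is exactly why the conclusion allows an arbitrary $x \in X$.

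First I would reduce to a single closed point. By hypothesis $\operatorname{supp} P$ is a finite set of closed points; if it contained two distinct points then $P$ would decompose as a nontrivial direct sum, contradicting the assumption (3) that $\hom(P,P)$ is a local $k$-algebra. Hence $\operatorname{supp} P = \{x\}$ for a single $x$. Next I would use assumption (2) to show $P$ has a single nonzero cohomology sheaf, verbatim as in Lemma~\ref{lemma_characterize point objects}: if $\mathcal{H}^i$ and $\mathcal{H}^j$ are the extreme nonzero cohomology sheaves with $i<j$, the truncation maps $\mathcal{H}^i \to P[i]$ and $P[j] \to \mathcal{H}^j$ compose with a nonzero map $\mathcal{H}^j \to \mathcal{H}^i$ (which exists by Lemma~\ref{lemma_local maps}, as both sheaves are supported at the single point $x$) to give a nonzero element of $\hom(P,P[i-j])$ with $i-j<0$, contradicting (2). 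Thus $P \cong \F[r]$ for a coherent sheaf $\F$ of zero-dimensional support and some $r \in \Z$.

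Finally I would identify $\F$ as the structure sheaf of a zero-cycle. Since $\F$ has zero-dimensional support, $\operatorname{depth} \F = 0$ over the Cohen-Macaulay local ring $\O_{X,x}$ (which has depth $1$), so Auslander-Buchsbaum gives $\operatorname{proj.dim.} \F = 1$ and hence $\ext^2(\F,-)=0$. Applying $\hom(\F,-)$ to the triangle attached to $0 \to \mathcal{G} \to \F \to \F/\mathfrak{m}_x\F \to 0$ produces a surjection $\hom(\F,\F[1]) \to \hom(\F,\F/\mathfrak{m}_x\F[1]) \to 0$; as the source is a cyclic $\hom(\F,\F)$-module by assumption (4), so is the target, which forces $\F/\mathfrak{m}_x\F$ to be one-dimensional over $k(x)$ (otherwise that Hom space would split as a direct sum and fail to be cyclic). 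Nakayama then makes $\F$ cyclic, so $\F \cong \O_X/\mathcal{I} = \O_{Z_x}$ for an $\mathfrak{m}_x$-primary ideal sheaf $\mathcal{I}$, giving $P \cong \O_{Z_x}[r]$.

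The step I expect to require the most care is the last obligation, namely confirming that $Z_x$ is genuinely a \emph{perfect} zero-cycle in the sense of its definition, so that the notation $\O_{Z_x}$ is warranted. Conditions (1) and (2) of that definition are immediate ($Z_x$ is zero-dimensional and $\O_{Z_x}[r] \cong P \in \perf X$), but condition (3) must be checked: since $Z_x$ is affine one has $\hom(P,P) \cong \hom_{\O_X}(\O_{Z_x},\O_{Z_x}) \cong \O_{Z_x}$, and transporting assumption (4) across this isomorphism shows $\hom(\O_{Z_x},\O_{Z_x}[1])$ is a cyclic right $\O_{Z_x}$-module, which is precisely condition (3).
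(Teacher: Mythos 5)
Your proposal is correct and is essentially the paper's own argument: the paper justifies this lemma only by the remark that ``the same method of proof'' as Lemma~\ref{lemma_characterize point objects} applies, which is exactly what you carry out, correctly dropping condition (1) and Lemma~\ref{lemma_point support and codimension} since zero-dimensionality of the support is now a hypothesis, so that no constraint keeps $x$ away from $X_{nG}$. Your closing verification that the resulting subscheme $Z_x$ is genuinely a perfect zero-cycle (transporting assumption (4) across $\hom(P,P)\cong \O_{Z_x}$) is a detail the paper leaves implicit, and you check it correctly.
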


As presented in Lemma \ref{LuntsOrlov_FM for perf}, any equivalence $\Phi:\perf X \to \perf Y$ is isomorphic to a Fourier-Mukai functor $$\Phi^{X \to Y}_\K(-) = \R \pi_{Y*}(\pi_X^*(-) \tens{\L} \mathcal{K}),$$ where $$X \overset{\pi_X}{\leftarrow} X \times Y \overset{\pi_Y}{\rightarrow} Y$$ are the projections and $\K \in \D^b(X \times Y)$. Here and from now on, fix one such Fourier-Mukai functor. Using the perfect zero-cycles, we aim to show that (extending $\Phi^{X \to Y}_\K$ to an equivalence $\D^b(X)\cong \D^b(Y)$) the functor preserves the structure sheaves of closed points. 

\begin{remark}\label{rem_FM preserves point objects}
  From the definition of a point object it is clear that an equivalence $\perf X \cong \perf Y$ must preserve the weak point objects, however since the Rouquier functor is not an endofunctor of $\perf X$, one may worry that the strong point objects are not preserved. Recall that from Lemma \ref{lemma_equivalences}, the Fourier-Mukai functor $\Phi^{X \to Y}_\K:\perf X \cong \perf Y$ extends to an equivalence $\D^b(X) \cong \D^b(Y)$, and since the Rouquier functor is valued in the bounded derived category of coherent sheaves by Lemma \ref{lemma_Rouquier functor is dualizing complex} and commutes with equivalences as in Lemma \ref{lemma_Rouquier commutes with equivalences}, we get a commutative square
  
  \begin{center}
    \begin{tikzcd}
      \operatorname{Perf} X \arrow[r, "\Phi^{X \to Y}_\K"] \arrow[d, "R_X"'] & \operatorname{Perf} Y \arrow[d, "R_Y"] \\
      \mathrm{D}^b(X) \arrow[r, "\Phi^{X \to Y}_\K"]                         & \mathrm{D}^b(Y).                       
      \end{tikzcd}
  \end{center}
  
  Now from Definition \ref{def-point}, strong point objects remain perfect complexes after an application of the Rouquier functor, so by this observation and the commutativity of the above, it follows that if $P$ is a strong point object in $\perf X$, then so is $\Phi^{X \to Y}_\K(P)$ in $\perf Y$.
  
  Note that this immediately implies that $\dim X = \dim Y=1$ by considering the codimensions of strong points objects in both categories. Since then $Y$ is assumed to be integral, $\dim Y=1$, and so $Y$ is Cohen-Macaulay for dimension reasons.
\end{remark}

We now see that the assumption that we have an equivalence implies that although $Y$ was not assumed to be strictly Cohen-Macaulay, all strong point objects are shifts of perfect zero-cycles.

\begin{lemma}\label{lemma_FM preserves points}
  Let $X$ and $Y$ be projective curves with $X$ strictly Cohen-Macaulay, and $\Phi^{X \to Y}_\K:\perf X \to \perf Y$ an equivalence of categories. Then all strong point objects on $Y$ are of the form $\O_{Z_y}[r]$ for a perfect zero-cycle $Z_y$ and $r\in \Z$. Further, given any closed point $x\in X$, there exists a closed point $y\in Y$ and $r\in \Z$ such that $\Phi^{X \to Y}_\K(\O_{Z_x}) \cong \O_{Z_y}[r]$.
\end{lemma}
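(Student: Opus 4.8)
The plan is to reduce the asserted rigidity on $Y$ to the rigidity already proved on $X$ by first showing that $Y$ is \emph{also} strictly Cohen--Macaulay, and then re-running the support computation of Lemma \ref{lemma_point support and codimension} on the target. Write $\Phi := \Phi^{X\to Y}_\K$, and note at the outset that both $\Phi$ and $\Phi^{-1}$ preserve weak point objects, since conditions (2)--(4) of Definition \ref{def-point} are phrased purely in terms of graded $\hom$-spaces, their composition product, and the induced module structure, all of which an equivalence transports; they preserve strong point objects by Remark \ref{rem_FM preserves point objects} and its evident mirror for $\Phi^{-1}$. The crucial first step is then this: because $X$ is strictly Cohen--Macaulay it is not Gorenstein, so by the characterization of Gorenstein schemes via the existence of a Serre functor on $\perf$ (the Proposition of Ballard and de Salas recalled above) the category $\perf X$ admits no Serre functor; as a Serre functor is transported across any triangulated equivalence, $\perf Y$ admits none either, whence $Y$ is not Gorenstein, i.e.\ $Y_{nG}\neq\emptyset$. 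Thus $Y$ is itself strictly Cohen--Macaulay.

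With $Y_{nG}\neq\emptyset$ in hand the first assertion is immediate. If $P$ is a strong point object on $Y$, then Lemma \ref{lemma_point support and codimension} applied to the curve $Y$ gives $\operatorname{supp}P\cap Y_{nG}=\emptyset$; since $Y$ is integral and $Y_{nG}$ is a nonempty closed set, $\operatorname{supp}P$ is a proper closed subset and hence a finite set of points. A strong point object is in particular a weak point object with zero-dimensional support, so Lemma \ref{lemma_partial point} forces $P\cong\O_{Z_y}[r]$ for a perfect zero-cycle $Z_y$.

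For the second assertion I would treat the two kinds of closed point separately. If $x\in X\setminus X_{nG}$ then $\O_{Z_x}$ is a strong point object (Lemma \ref{lemma_characterize point objects}), so $\Phi(\O_{Z_x})$ is a strong point object on $Y$ and the previous paragraph gives $\Phi(\O_{Z_x})\cong\O_{Z_{y}}[r]$. Moreover, because perfect zero-cycles at distinct points have disjoint support and hence vanishing $\hom$ in all degrees, and $\Phi$ preserves these $\hom$-spaces, distinct points of $X\setminus X_{nG}$ are sent to perfect zero-cycles supported at \emph{distinct} points of $Y$; this yields an injective assignment $x'\mapsto y(x')$ from the infinite set $X\setminus X_{nG}$ into the closed points of $Y$. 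Now let $x\in X_{nG}$ be arbitrary; here $\O_{Z_x}$ is only a weak point object, so $\Phi(\O_{Z_x})$ is a weak point object on $Y$ whose support must be pinned down before Lemma \ref{lemma_partial point} applies. For each $x'\in X\setminus X_{nG}$ with $x'\neq x$, writing $\Phi(\O_{Z_{x'}})\cong\O_{Z_{y(x')}}[r']$, one has
\[
  \hom_{\D^b(Y)}\!\big(\Phi(\O_{Z_x}),\,\O_{Z_{y(x')}}[m]\big)\;\cong\;\hom_{\D^b(X)}\!\big(\O_{Z_x},\O_{Z_{x'}}[m-r']\big)=0 \quad\text{for all } m\in\Z,
\]
by disjointness of the supports $\{x\}$ and $\{x'\}$. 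Since $\O_{Z_{y(x')}}$ is a genuine perfect zero-cycle at $y(x')$, Lemma \ref{lemma_perfect point test support} shows $y(x')\notin\operatorname{supp}\Phi(\O_{Z_x})$. As the $y(x')$ are infinite in number, $\operatorname{supp}\Phi(\O_{Z_x})$ omits infinitely many points of the integral curve $Y$, hence is proper and therefore finite; Lemma \ref{lemma_partial point} then gives $\Phi(\O_{Z_x})\cong\O_{Z_y}[r]$, completing the proof.

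The step I expect to be the crux is the very first one, ruling out that $Y$ is Gorenstein. This is essential rather than cosmetic: were $Y$ a smooth (or Gorenstein) curve of arithmetic genus one, objects of full support such as $\O_Y$ or a degree-zero line bundle would satisfy every condition in Definition \ref{def-point} for a strong point object, so the first assertion would be outright false. The Serre-functor characterization of Gorenstein-ness is precisely the categorical input that transfers strict Cohen--Macaulayness from $X$ to $Y$ and thereby makes Lemma \ref{lemma_point support and codimension} available on the target; everything downstream is then a support computation in the spirit of the preceding lemmas.
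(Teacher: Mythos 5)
Your proof is correct, but for the first assertion it takes a genuinely different route from the paper's. The paper never establishes at this stage that $Y$ is non-Gorenstein; instead it argues through the essential image: since $X$ is strictly Cohen--Macaulay, Lemma \ref{lemma_characterize point objects} identifies the strong point objects $\tilde{P}(X)$ with shifts of perfect zero-cycles off $X_{nG}$, so every strong point object $Q$ on $Y$ is of the form $\Phi^{X\to Y}_\K(\O_{Z_{x'}}[r])$; it then tests $Q$ against the image $\O_{Z_y}[s]\cong\Phi^{X\to Y}_\K(\O_{Z_x})$ of a zero-cycle at some $x\neq x'$, and homological orthogonality of disjointly supported zero-cycles plus Lemma \ref{lemma_perfect point test support} shows $\operatorname{supp}Q$ is proper, so Lemma \ref{lemma_partial point} applies. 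You instead transport the strict Cohen--Macaulay property itself: $\perf X$ has no Serre functor by the Ballard--de Salas criterion recalled in the paper, Serre functors are intrinsic to the triangulated category, hence $\perf Y$ has none, so $Y_{nG}\neq\emptyset$, and then Lemma \ref{lemma_point support and codimension} applied on $Y$ immediately bounds the support of any strong point object there. Your route is cleaner, makes the role of the strict-CM hypothesis transparent (your genus-one example correctly shows why the assertion would fail for Gorenstein $Y$, e.g.\ $\O_Y$ on an elliptic curve is a strong point object), and yields as a by-product that $Y$ is strictly Cohen--Macaulay --- a statement the paper only obtains indirectly (Remark \ref{rem_FM preserves point objects} gives merely that $Y$ is Cohen--Macaulay). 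The price is reliance on the Serre-functor characterization of Gorensteinness as an external input, whereas the paper stays entirely within its support-testing machinery. Your proof of the second assertion is essentially the paper's. Two small points: the injectivity of $x'\mapsto y(x')$ needs, in addition to the vanishing of homs for distinct points, the \emph{nonvanishing} of some hom between two perfect zero-cycles supported at a common point (which follows from Lemma \ref{lemma_local maps} or Lemma \ref{lemma_perfect point test support}); and the appeal to infinitely many omitted points is unnecessary, since omitting a single point of the integral curve $Y$ already forces the closed set $\operatorname{supp}\Phi^{X\to Y}_\K(\O_{Z_x})$ to be proper, hence finite.
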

\begin{proof}
  We first show that the existence of the equivalence $\Phi^{X \to Y}_\K$ implies that every strong point object of $\perf Y$ is of the same form. Denote by $\tilde{P}(-)$ the strong point objects in $\perf(-)$, and by $P(-)$ the objects in $\perf (-)$ isomorphic to a shift of a structure sheaf of a perfect zero-cycle supported off the non-Gorenstein locus. Then by definition of the objects in $\tilde{P}(X)$ and Remark \ref{rem_FM preserves point objects}, they are preserved by equivalences, and so Lemma \ref{lemma_characterize point objects} and that $\Phi^{X \to Y}_\K$ is an equivalence gives us the following inclusions of sets $$P(X) = \tilde{P}(X) \cong \tilde{P}(Y) \supset P(Y).$$ 
  We would like to show that $\tilde{P}(Y) = P(Y)$. Consider a strong point object $Q \in \tilde{P}(Y)$. By Lemma \ref{lemma_partial point}, it is enough to show that this strong point object has zero dimensional support. 
  
  Note that by the inclusion of sets above, $Q$ is in the essential image of the Fourier-Mukai functor, that is $\Phi^{X \to Y}_\K(\O_{Z_{x'}}[r]) \cong Q$ for some closed point $x' \in X$, $r\in \Z$, and perfect zero cycle $Z_{x'}$. Similarly, given any strong point object $\O_{Z_y}$ supported on a closed point $y \in Y \setminus Y_{nG}$, we can find $x \in X \setminus X_{nG}$ such that $\Phi^{X \to Y}_\K(\O_{Z_{x}}) = \O_{Z_{y}}[s]$ for some $s\in \Z$. By choosing some other closed point in $Y \setminus Y_{nG}$ if necessary, we can arrange that $x \neq x'$.  Since $\Phi^{X \to Y}_\K$ is an equivalence of categories, and strong point objects with disjoint support are homologically orthogonal, it follows from fully faithfulness that $$\hom(\O_{Z_{x'}},\O_{Z_x}) \cong \hom(Q,\O_{Z_{y}}[t]) = 0 $$ for all $t\in \Z$, but by Lemma \ref{lemma_perfect point test support} this means $Q$ is supported on a proper (zero-dimensional) subset. Hence by Lemma \ref{lemma_partial point} $Q \cong \O_{Z_x}[r]$ for some $x \in X \setminus X_{nG}$ and $r \in \Z$. This shows the first statement.

  Now we want to show that perfect zero-cycles (regardless of where they are supported) are preserved by the Fourier-Mukai functor. If they are supported in $X \setminus X_{nG}$, the previous paragraph implies the claim, so let $x \in X_{nG}$. Again consider the complex $Q = \Phi^{X \to Y}_\K(\O_{Z_x})$. Choosing some $x' \in X \setminus X_{nG}$, the conclusion of the previous paragraph says $\Phi^{X \to Y}_\K(\O_{Z_{x'}}) \cong \O_{Z_{y'}}[r]$ for some $r \in \Z$. Fully faithfulness implies that $$\hom(\O_{Z_x},\O_{Z_{x'}}) \cong \hom(Q,\O_{Z_{y'}}[r]) =0,$$ for all $r\in \Z$, which by Lemma \ref{lemma_perfect point test support} tells us that $\operatorname{supp}Q$ is a proper subset of $Y$, and being closed, must be zero-dimensional since $Y$ is an integral curve. Now again Lemma \ref{lemma_partial point} implies that $Q \cong \O_{Z_y}[s]$ for some $y \in Y_{nG}$ and $s \in \Z$. This completes the proof.
\end{proof}

\begin{remark}\label{rem_FM preserves smooth points}
  Note that Lemma \ref{lemma_FM preserves points}, together with Remark \ref{rem_FM preserves point objects}, implies for all closed points $x \in X$, there is a closed point $y \in Y$ such that $F(\O_{Z_x}) = \O_{Z_y}[r]$ for some $r\in \Z$. It follows form this statement that if $x \in X$ is a regular point, so is $y \in Y$. Indeed by \ref{remark_regular iff perfect} it is enough to see that if $\O_{Z_x} \cong k(x)$, then $\O_{Z_y} \cong k(y)$. Since $\Phi^{X \to Y}_\K$ is fully faithful, $$\hom(k(x),k(x)) \cong \hom(\O_{Z_y},\O_{Z_y}).$$ Since the left hand side is the residue field $k(x)$, so must the right hand side. But the only module over $\O_{Y,y}$ with this endomorphism algebra is the residue field. This shows $\O_{Z_y} \cong k(y)$ and in particular, $y \in Y$ is a smooth point.
  % Indeed, if it were not the residue field, since it is a finite module over a local ring supported only at the maximal ideal, it has a nonzero surjection to the residue field, and a nonzero inclusion of the residue field. Since neither of these maps are isomorphisms by assumption, the composition is a nonzero noninvertible endomorphism, which contradicts that the endomorphism algebra is a field. 
\end{remark}

We are now in a position to prove Theorem \ref{Main Theorem}.
 
\begin{proof}[Proof of Theorem \ref{Main Theorem}]
  We show that the Fourier-Mukai functor $\Phi^{X \to Y}_\K$ preserves the structure sheaves of closed points, and rest of the proof follows from Lemma \ref{lemma_skyscraper sheaves give iso}.
  
  Using Lemma \ref{lemma_equivalences}, first extend the equivalence $\Phi^{X \to Y}_\K$ to an equivalence $\D^b(X) \overset{\sim}{\to} \D^b(Y)$.  Fix an arbitrary closed point $x\in X$, and denote by $Q$ the complex $\Phi^{X \to Y}_\K(k(x))$. If we consider any other closed point $x' \in X$ which is distinct from $x$, it follows via Lemma \ref{lemma_perfect point test support} that $\hom_{\D^b(X)}(k(x),\O_{Z_{x'}}[r])=0$ for all $r\in \Z$. Since $\Phi^{X \to Y}_\K$ is fully faithful, $$\hom_{\D^b(X)}(k(x),\O_{Z_{x'}}[r]) \cong \hom_{\D^b(Y)}(Q,\Phi^{X \to Y}_\K(\O_{Z_{x'}})[r]) = 0$$ for all $r\in \Z$ as well. By Lemma \ref{lemma_FM preserves points}, the Fourier-Mukai preserves the perfect zero-cycles, and so there exists some $y' \in Y$ and $s \in \Z$ such that $\Phi^{X \to Y}_\K(\O_{Z_{x'}})[r] \cong \O_{Z_{y'}}[s]$. Finally, applying once again Lemma \ref{lemma_perfect point test support}, it follows that $\operatorname{supp}Q$ is a proper closed subset of $Y$, hence zero-dimensional.

  The rest of the proof proceeds similarly to the proof of Lemma \ref{lemma_characterize point objects}. Noting that $$\hom_{\D^b(X)}(k(x),k(x)[i]) = \begin{cases}
    0 & i<0 \\ k(x) & i=0,
  \end{cases}$$ 
  and once again using that $\Phi^{X \to Y}_\K$ is fully faithful, it follows that $$\hom(Q, Q[i]) = \begin{cases}
    0 & i<0 \\ k(x) & i=0.
  \end{cases}$$ 
  First consider when $i=0$. Since the endomorphisms are a field, the complex $Q$ must be supported at a single point. Now consider when $i<0$. Then since $Q$ is supported at a single point, an identical argument as in Lemma \ref{lemma_characterize point objects} implies $Q \cong \F [r]$ for some $\F \in \coh Y$, supported at a single point $y \in Y$, and $r \in \Z$. Considering $i=0$ once again, $\operatorname{End}(Q) \cong k(x)$ implies that $Q \cong k(y)[r]$ for some $r \in \Z$ and some $y\in Y$ a closed point. This is what we wanted to show.
\end{proof}

\bibliographystyle{alpha} %myalpha
\bibliography{references}

\end{document}